 \newtheorem{thm}{Theorem}[section]
 \newtheorem{cor}[thm]{Corollary}
 \newtheorem{lem}[thm]{Lemma}
 \newtheorem{prop}[thm]{Proposition}
 \newtheorem{defn}[thm]{Definition}
 \newtheorem{rem}[thm]{Remark}
 \newtheorem{ex}{Example}
\newcommand{\Cinf}{\ensuremath{\mathcal{C}^\infty}}
\newcommand{\D}{\ensuremath{{\mathcal D}}}
\newcommand{\E}{\ensuremath{{\mathcal E}}}
\newcommand{\mb}[1]{\ensuremath{\mathbb{#1}}}
\newcommand{\N}{\mb{N}}
\newcommand{\R}{\mb{R}}
\newcommand{\C}{\mb{C}}
\newcommand{\G}{\ensuremath{{\mathcal G}}}
\newcommand{\EM}{\ensuremath{{\mathcal E}_{\mathrm{M}}}}
\newcommand{\NN}{\ensuremath{{\mathcal N}}}
\newcommand{\lara}[1]{\langle #1 \rangle}
\newcommand{\WF}{\mathrm{WF}}
\newcommand{\singsupp}{\mathrm{sing supp}}
\newcommand{\Char}{\ensuremath{\text{Char}}}
\newcommand{\grad}{\ensuremath{\mbox{\rm grad}\,}}
\renewcommand{\div}{\ensuremath{\mbox{\rm div}\,}}
\newfont{\bl}{msbm10 scaled \magstep2}
\newcommand{\beq}{\begin{equation}}
\newcommand{\eeq}{\end{equation}}
\newcommand{\isom}{\cong}
\newcommand{\col}{\colon}
\newcommand{\notmid}{\mid\kern-0.5em\not\kern0.5em}
\newcommand{\be}{\beta}
\newcommand{\ga}{\gamma}
\newcommand{\Ga}{\Gamma}
\newcommand{\de}{\delta}
\newcommand{\eps}{\varepsilon}
\newcommand{\vphi}{\varphi}
\newcommand{\la}{\lambda}
\newcommand{\Om}{\Omega}
\newcommand{\supp}{\mathop{\mathrm{supp}}}
\newcommand{\comp}{\subset\subset}
\newcommand{\GaG}{\Gamma_\G}
\newcommand\bsig{\ensuremath{{\sigma}}}
\newcommand\bsighat{\ensuremath{\widehat{{{\sigma}}}}}
\newcommand\bsigepshat{\ensuremath{\widehat{{{\sigma}}}_{\eps}}}
\newcommand\bxi{\ensuremath{{{\xi}}}}
\newcommand\bxihat{\ensuremath{\widehat{{\xi}}}}
\newcommand\bxieps{\ensuremath{{{\xi}}_{\eps}}}
\newcommand\bxiepshat{\ensuremath{\widehat{{{\xi}}}_{\eps}}}
\newcommand{\SobSDt}[2]{^{\nabla}\|#1\|_{S_\tau,\,\eps}^{#2}}
\newcommand{\SobSDz}[2]{^{\nabla}\|#1\|_{S_\zeta,\,\eps}^{#2}}
\newcommand{\SobODt}[2]{^{\nabla}\|#1\|_{\Omega_\tau,\,\eps}^{#2}}
\title{Wave equations on non-smooth space-times}
\author{G\"unther H\"ormann \footnote{University of Vienna, Faculty of Mathematics, guenther.hoermann@univie.ac.at}
\\
Michael Kunzinger\footnote{University of Vienna, Faculty of Mathematics, michael.kunzinger@univie.ac.at} \\
Roland Steinbauer\footnote{University of Vienna, Faculty of Mathematics, roland.steinbauer@univie.ac.at}}
\begin{document}
\date{}
\maketitle

\begin{abstract}
We consider wave equations on Lorentzian manifolds in case of low regularity.
We first extend the classical solution theory to prove global unique solvability 
of the Cauchy problem for distributional data and right hand side on smooth globally
hyperbolic space-times. Then we turn to the case where the metric is non-smooth
and present a local as well as a global existence and uniqueness result for a large class of 
Lorentzian manifolds with a weakly singular, locally bounded  metric in Colombeau's 
algebra of generalized functions.
\medskip\\
\noindent{\footnotesize {\bf Mathematics Subject Classification (2010):}\\
Primary: 58J45; 
         Secondary: 35L05, 
                    35L15, 
                    35D99, 
                    46F30. 
}
  
 \noindent{\footnotesize {\bf Keywords} Wave equation, Cauchy problem, global hyperbolicity, distributional solutions, generalized solutions}

\end{abstract}


\section{Introduction}

In this note we are concerned with wave equations on Lorentzian manifolds, with a
particular
interest in the Cauchy problem in non-smooth situations. To achieve a
self-contained presentation 
we first give a brief account on the classical solution theory in the smooth
case, in particular
discussing local well-posedness as well as global well-posedness on globally
hyperbolic manifolds.
We first extend this theory to the case of distributional data and right hand
sides. Then we turn to
the case where the metric is non-smooth, that is we deal with normally
hyperbolic operators
with coefficients of low regularity. Actually the regularity class of the metric
which we have 
in mind is below ${\mathcal C}^{1,1}$ (i.e., the first derivative locally
Lipschitz)---the largest 
class where standard differential geometric results such as existence and
uniqueness of geodesics 
remain valid, and also below the Geroch-Traschen class of metrics---the largest
class that
allows for a consistent distributional treatment (\cite{GT:87,LeFM:07}). It is
evident that
no consistent distributional solution concept is available for these equations. 
Therefore we consider a large class of weakly singular, locally bounded
metrics in the setting of generalized global analysis and nonlinear
distributional geometry
(\cite[Sec.\ 3.2]{GKOS:01}) based on Colombeau algebras of generalized functions
(\cite{Colombeau:84,Colombeau:85}). 
In particular, we present a local existence and uniqueness result in the spirit
of \cite{GMS:09}
and extend it to a global result on space-times with a generalized metric that
allows for a suitable globally
hyperbolic metric splitting. 

This line of research has drawn some motivation from general relativity:
In \cite{C:96} Chris Clarke suggested to replace the standard geometric
definition of singularities 
by viewing them as obstructions to the well-posedness of the initial value
problem for a scalar field. 
However, for many relevant examples as e.g. impulsive gravitational waves,
cosmic strings,
and shell crossing singularities which have a metric of low regularity the
Cauchy problem cannot consistently be 
formulated in distribution theory and one has to use a more sophisticated
solution concept. In case of shell crossing
singularities Clarke himself used a cleverly designed weak solution concept to
argue for local solvability of the wave equation 
(\cite{C:98}). On the other hand Vickers and Wilson (\cite{VW:00}) used
Colombeau generalized functions to show local 
well-posedness of the wave equation in conical space-times modelling a cosmic
string. This result has been generalized
to a class of locally bounded space-times in \cite{GMS:09}, also see
\cite{dude} for the static case and \cite{H:11} for an extension to non-scalar
equations.

This work is organized in the following way. In Section \ref{SCP} we recall the
classical theory
of normally hyperbolic operators on smooth Lorentzian manifolds, thereby essentially following a
recent book by B\"ar et.\ al.\ 
(\cite{BGP:07}). We extend their global existence and uniqueness result (Th.\
\ref{mainwave}) to the case of distributional data and
right hand side in Section \ref{dcp}. In Section \ref{gga} we
recall the necessary background from global
analysis based on Colombeau generalized functions (\cite{GKOS:01}). 
We devote Section \ref{local}
to presenting a variant of the local existence 
and uniqueness theorem for the wave equation in weakly singular space-times of
\cite{GMS:09}. Finally, in Section \ref{global}
we extend this result to a global theorem for weakly singular space-times which
allow for a suitable globally hyperbolic splitting of the metric.

\section{A review of wave equations on smooth\\ Lorentzian manifolds}\label{SCP}

In this section we present the solution theory for wave equations on smooth Lorentzian 
manifolds with smooth right hand side and data (cf.\ (\ref{CP}) below),
mainly based on \cite{BGP:07, W:09} (see also \cite{F:75}).
We will at once formulate the theory for normally hyperbolic
operators on sections of a vector bundle $E$ over some Lorentzian manifold $M$. 

We first fix some notations. Throughout we will assume $(M,g)$ to be a smooth, connected, time-oriented
Lorentz manifold and $E\to M$ a smooth vector bundle over $M$. By $\Gamma(M,E)$ or $\Gamma(E)$
we denote the smooth sections of $E\to M$, $\D(M,E)$ is the space of compactly supported 
sections, and $\Gamma^k(E)$ denotes the spaces of sections of finite differentiability. 
Given a finite-dimensional vector space $W$, the space of $W$-valued distributions
in $E$, $\D'(M,E,W)$ comprises the continuous linear maps $\D(M,E^*)\to W$. For example, 
given $x\in M$, the delta-distribution $\delta_x$ is the $E_x^*$-valued distribution in $E$
given by $\delta_x: \D(M,E^*) \to E_x^*$, $\varphi\mapsto \vphi(x)$.

A differential operator $P:\Gamma(E)\to \Gamma(E)$ of second order 
is called {\em normally hyperbolic} if its principal
symbol satisfies $\sigma_P(\xi)|_x = -\langle \xi,\xi\rangle \cdot \mathrm{id}_{E_x}$ ($\xi\in T_x^*M$). If $E$ is
equipped with a linear connection $\nabla$ then an important example of a normally hyperbolic
operator is given by the connection d'Alembertian
$$
\Box^\nabla:
\Gamma(E) \stackrel{\nabla}{\longrightarrow} \Gamma(T^*M\otimes E) \stackrel{\nabla}{\longrightarrow}
\Gamma(T^*M\otimes T^*M\otimes E)
\stackrel{-\mathrm{tr}\otimes \mathrm{id}_E}{\longrightarrow} \Gamma(E),
$$
where $\mathrm{tr}: T^*M\otimes T^*M \to \R$, $\mathrm{tr}(\xi\otimes\eta)=\langle \xi, \eta
\rangle$ is the metric trace. By the Weitzenb\"ock formula, for any normally hyperbolic operator
on $\Gamma(E)$ there exists a unique connection $\nabla$ on $E$ and a unique $B\in 
\Gamma(\mathrm{Hom}(E,E))$ such that $P=\Box^\nabla + B$.
For any differential operator $D:\Gamma(E) \to \Gamma(F)$, its formal adjoint
$D^*:\Gamma(F^*) \to \Gamma(E^*)$ is uniquely characterized by
$$
\int \psi(D\vphi)\,dV = \int (D^*\psi)(\vphi)\,dV
$$
for all $\vphi\in \D(M,E)$ and all $\psi\in \D(M,F^*)$. Here $dV$ is the Lorentzian 
volume density on $M$.

For notions from causality in Lorentz manifolds our main references are 
\cite{ON:83,BEE:96}. In particular, for $p,\ q\in M$,
by $p<q$ (resp.\ $p<<q$) we mean that there exists a causal (resp.\ timelike) future directed
curve from $p$ to $q$. For $A\subseteq\Omega\subseteq M$ we write 
$$
I^\Omega_+(A) := \{q\in \Omega \mid \exists p\in A \text{ s.t. } q>> p \text{ in } \Omega\}
$$
for the relative chronological future of $A$ in $\Omega$, and analogous for $I^\Omega_-(A)$.
Also we write 
$$
J^\Omega_+(A) := \{q\in \Omega \mid \exists p\in A \text{ s.t. } q\geq p \text{ in } \Omega\},
$$
for the relative causal future and analogous for $J^\Omega_-(A)$. Here $q\geq p$ means either $p<q$ or $p=q$.
A domain $\Omega$ is called causal if its closure $\overline\Omega$ is contained in a geodesically convex domain
$\Omega'$ and if for all $p,q\in \overline\Omega$ the causal diamond $J^{M}_+(p)\cap
J^{M}_-(q)$ is a compact subset of $\Omega'$. The manifold
$M$ is said to satisfy the {\em causality condition} (CC) if there are no closed causal curves in $M$.
It satisfies the {\em strong causality condition} (SCC) if for each $p\in M$ and
each neighborhood $U$ of $p$ there exists a neighborhood $V\subseteq U$ such that  
no causal curve that starts and ends in $V$ can leave $U$ (i.e., there are no `almost
closed' causal curves in $M$). 

A {\em Cauchy hypersurface} is a subset $S$ of $M$ that is intersected by each inextendible timelike
curve exactly once. $M$ is called {\em globally hyperbolic} if it satisfies SCC and for all
$p,q\in M$ the causal diamond $J^{\Omega'}_+(p)\cap
J^{\Omega'}_-(q)$ is compact. The following is a very useful characterization of global
hyperbolicity, due in its final form to \cite{BS:05}. 
\begin{thm} \label{BS} For any time-oriented Lorentzian manifold $M$, the following are equivalent:
\begin{itemize}
\item[(i)] $M$ is globally hyperbolic.
\item[(ii)] There exists a Cauchy hypersurface in $M$.
\item[(iii)] $M$ is isometric to $(\R \times S, -\beta dt^2 + g_t)$, where $\beta=\beta(t,x)$ is
smooth and strictly positive, $t\mapsto g_t$ is a smooth family of Riemannian
metrics on $S$, and each $\{t\}\times S$ is a (smooth) spacelike Cauchy-hypersurface in $M$.   
\end{itemize}
\end{thm}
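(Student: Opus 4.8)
The plan is to prove the three statements equivalent by establishing the cycle (iii) $\Rightarrow$ (ii) $\Rightarrow$ (i) $\Rightarrow$ (iii), isolating the genuinely hard analytic content in the last implication. The first step is essentially free: if $M$ is isometric to $(\R\times S,-\beta\,dt^2+g_t)$ with each $\{t\}\times S$ a spacelike Cauchy hypersurface, then in particular $\{0\}\times S$ is a Cauchy hypersurface of $M$, so (ii) holds immediately.

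For (ii) $\Rightarrow$ (i) I would reproduce the classical argument of Geroch. Given a Cauchy hypersurface $S$, one first observes that every inextendible causal curve also meets $S$ exactly once, so that the Cauchy development $D(S)$ equals all of $M$. From this one deduces strong causality---a closed or almost-closed causal curve would force a timelike curve either to cross $S$ more than once or to miss it---and the compactness of the causal diamonds $J_+^M(p)\cap J_-^M(q)$, which can be controlled by the limit-curve theorem together with the compactness properties inherited from $S$. Hence $M$ satisfies SCC and has compact causal diamonds, i.e.\ (i).

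The implication (i) $\Rightarrow$ (ii) is Geroch's volume-function construction. Fixing a finite measure $\mu$ on $M$ equivalent to the volume density $dV$, one sets $t_\pm(p):=\mu(J_\pm^M(p))$ and $\tau:=t_-/t_+$ (or the logarithm of this ratio). Global hyperbolicity is precisely what makes $t_\pm$ continuous and forces $t_\mp(p)\to 0$ as $p$ runs to the past, resp.\ future, ends of any inextendible causal curve; one then checks that $\tau$ is continuous, strictly increasing along future-directed causal curves, and exhausts $(0,\infty)$ along each inextendible such curve. Any level set $\tau^{-1}(c)$ is therefore a topological Cauchy hypersurface, yielding (ii) together with a homeomorphic splitting $M\cong\R\times S$.

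The real difficulty, and the content for which \cite{BS:05} is cited, is (i) $\Rightarrow$ (iii): upgrading Geroch's merely continuous time function to a smooth \emph{Cauchy temporal function}, that is, a smooth $T\col M\map\R$ whose gradient is everywhere past-directed timelike and whose level sets are smooth spacelike Cauchy hypersurfaces. Here I would follow Bernal--S\'anchez: first smooth a single topological Cauchy hypersurface to a spacelike smooth one, and then build $T$ by a delicate local-to-global smoothing, using a partition of unity adapted to a suitable covering, that preserves the crucial condition $\langle\nabla T,\nabla T\rangle<0$ at every point. Once such a $T$ is available, normalising its gradient and flowing along $-\nabla T/\langle\nabla T,\nabla T\rangle$ identifies $M$ diffeomorphically with $\R\times S$ with $T$ as the first coordinate, and in these adapted coordinates the metric automatically assumes the orthogonal form $-\beta\,dt^2+g_t$ with $\beta>0$ smooth and $t\mapsto g_t$ a smooth family of Riemannian metrics, which is (iii). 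The main obstacle is exactly this smoothing step: smoothness and the timelike-gradient property must be achieved simultaneously and globally, and it is the need to preserve causal character under smoothing---rather than any isolated estimate---that makes the construction subtle.
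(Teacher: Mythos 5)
Your outline is correct and matches the argument the paper relies on: the paper itself offers \emph{no} proof of Theorem \ref{BS}, stating it as a known result ``due in its final form to \cite{BS:05}'', and your cycle (iii) $\Rightarrow$ (ii) $\Rightarrow$ (i) $\Rightarrow$ (iii) --- trivial first step, Geroch's argument for (ii) $\Rightarrow$ (i), the volume-function time $\tau = t_-/t_+$ for (i) $\Rightarrow$ (ii), and the Bernal--S\'anchez smoothing of the time function to obtain the orthogonal splitting $-\beta\,dt^2 + g_t$ --- is precisely the standard proof that the citation refers to. One small inaccuracy worth fixing: an inextendible \emph{causal} curve need not meet a Cauchy hypersurface exactly once (a null curve may intersect it in a connected segment rather than a single point); the correct statement, which is all your (ii) $\Rightarrow$ (i) step actually needs, is that every inextendible causal curve meets $S$ in a nonempty connected set.
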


We remark that by a recent result (\cite{BS:07}) global hyperbolicity is also characterized by
the condition that results from replacing SCC by CC in the above definition. 

Turning now to the problem of solving the initial value problem for a normally hyperbolic
differential operator on $E\to M$, we first consider the case where $M = (V,\langle\ ,\,\rangle)$
is a Lorentz vector space of dimension $n$ (later on, the role of $V$ will be played by
a tangent space $T_xM$ to $M$). 
We denote by $\gamma: V\to \R$, $\gamma(X):= -\langle X,X\rangle$ the
quadratic form associated with $\langle\ ,\,\rangle$. The analytic centerpiece of the entire
construction that is to follow is provided by the so-called {\em Riesz distributions}:
\begin{defn} For $\alpha\in\C$ with $\mathrm{Re}(\alpha)>n$, let 
$$
R_{\pm}(\alpha)(X) := \left\{ 
\begin{array}{rl}
C(\alpha,n) \gamma(X)^{\frac{\alpha-n}{2}} & \text{ if } X\in J_{\pm}(0)\\
0 & \text{ else }
\end{array}
\right.
$$
where $C(\alpha,n):= \frac{2^{1-\alpha} \pi^{\frac{2-n}{2}}}{\Gamma(\frac{\alpha}{2})
\Gamma(\frac{\alpha-n}{2} + 1)}$.
$R_+(\alpha)$ ($R_-(\alpha)$) is called advanced (retarded) Riesz distribution on $V$.
\end{defn}
$R_\pm(\alpha)$ is continuous on $V$, and using the fact that $\Box R_\pm(\alpha+2) = R_\pm(\alpha)$
for $\mathrm{Re}(\alpha) > n+2$, $\alpha \mapsto R_\pm(\alpha)$ uniquely extends to a holomorphic
family of distributions on all of $\C$. For all $\alpha$, $\supp(R_\pm(\alpha))\subseteq
J_\pm(0)$ and $\singsupp(R_\pm(\alpha))$ is contained in the boundary $C_\pm(0)$ of
$J_\pm(0)$. Moreover, $R_\pm(0) = \delta_0$.

The next step in the construction is to transport the Riesz distributions onto the Lorentz
manifold $M$. To this end, let $\Omega$ be a normal neighborhood of $x\in M$ and define the
smooth function $\mu_x: \Omega \to \R$ by $dV=\mu_x \cdot (\exp_x)_*(dz)$, where 
$dz$ is the standard volume density on $T_xM$, and
$\exp_x$ is the exponential map at $x$. In normal coordinates around $x$, $\mu_x = 
\sqrt{|\det g_{ij}|}$. Now we set $R^\Omega_\pm(\alpha,x) := \mu_x \cdot (\exp_x)_* R_\pm(\alpha)$,
i.e.,
$$
\forall \vphi\in \D(\Omega,\C): \quad \langle R^\Omega_\pm(\alpha,x),\vphi\rangle
= \langle R_\pm(\alpha), (\mu_x\cdot \vphi)\circ \exp_x \rangle\,.
$$
$R^\Omega_\pm(\alpha,x)$ are called {\em advanced} ({\em retarded}) {\em Riesz distributions}
on $\Omega$. The analytical properties of $R_\pm(\alpha)$ carry over to the manifold
setting, albeit in slightly more involved form, e.g. (setting $\Gamma_x:=\gamma\circ\exp_x^{-1}$),
\begin{equation}\label{rxa}
\Box R^\Omega_\pm(\alpha+2,x) = \left(\frac{1}{2\alpha}
(\Box\Gamma_x -2n)+1 \right) R^\Omega_\pm(\alpha,x)\quad\text{for $\alpha\not=0$}.
\end{equation}
Moreover, $R^\Omega_\pm(0,x) = \delta_x$, $\supp(R^\Omega_\pm(\alpha,x))\subseteq
J^\Omega_\pm(x)$ and $\singsupp(R^\Omega_\pm(\alpha,x))\subseteq C^\Omega_\pm(x)$.

For any normally hyperbolic operator $P$ on $\Gamma(E)$, our aim is to construct a
fundamental solution in the following sense:
\begin{defn}
Let $P:\Gamma(E)\to \Gamma(E)$ be normally hyperbolic. A distribution $F\in \D'(M,E,E_x^*)$ 
such that $PF = \delta_x$ (i.e., $\langle F,P^*\vphi\rangle = \vphi(x)$ for all $\vphi\in 
\D(M,E^*)$) is called a fundamental solution of $P$ at $x\in M$. $F$ is called advanced
(retarded) if $\supp(F)\subseteq J^M_+(x)$ ($\subseteq J^M_-(x)$).
\end{defn}
For example, in a Lorentz vector space $V$ as above, $R_\pm(2)$ is an advanced (retarded)
fundamental solution of $\Box$ at $0$ since $\Box R_\pm(2) = R_\pm(0) = \delta_0$.
In the manifold setting matters are more complicated, as already indicated in (\ref{rxa}).
We first make the following formal ansatz for a fundamental solution on a normal neighborhood
$\Omega$ of $x$:
\begin{equation}\label{formal}
{\mathcal R}_\pm(x) := \sum_{k=0}^\infty V^k_x R^\Omega_\pm(2+2k,x)\,,
\end{equation}
where $V^k_x\in \Gamma(\Omega,E\otimes E_x^*)$, the so-called {\em Hadamard-coefficients},
are to be determined. By formal termwise differentiation one finds that in order for
${\mathcal R}_\pm(x)$ to be a fundamental solution, the $V^k_x$ have to satisfy the 
following transport equations:
$$
\nabla_{\grad \Gamma_x} V^k_x - (\frac{1}{2}\Box\Gamma_x -n + 2k)V^k_x = 2kPV^{k-1}_x
\quad (k\ge 0)
$$
with $V^0_x(x) = \mathrm{id}_{E_x}$. The Hadamard coefficients are therefore uniquely
determined as the solution to this problem. 

Next, introducing convergence-generating factors into (\ref{formal}) we obtain
an approximate fundamental solution $\tilde {\mathcal R}(x)$ in the following sense:
$$
\exists K_\pm \in \Gamma(\bar \Omega \times \bar \Omega,E^* \boxtimes E) \text{ s.t.\ }
P_{(2)} \tilde {\mathcal R}(x) = \delta_x + K_\pm(x,\,.\,)\,.
$$
Here, $E^* \boxtimes E$ denotes the exterior tensor product and $P_{(2)}$ indicates
that $P$ acts on the second variable. We now use the $K_\pm$ as integral kernels to define
for any continuous section $u$ of $E^*$ over $\bar\Omega$:
$$
({\mathcal K}_\pm u)(x) := \int_{\bar \Omega} K_\pm(x,y) u(y)\,dV(y)\,.
$$
For $\Omega$ a sufficiently small causal domain and any $k$, $\mathrm{id} + {\mathcal K}_\pm:
\Gamma^k(\bar\Omega,E^*) \to \Gamma^k(\bar\Omega,E^*)$ is an isomorphism with bounded inverse
given by the Neumann series
$$
(\mathrm{id} + {\mathcal K}_\pm)^{-1} = \sum_{j=0}^\infty (-{\mathcal K}_\pm)^j\,.
$$
Finally, for each $\vphi\in \D(\Omega,E^*)$, we set 
$F^\Omega_\pm(\,.\,)[\vphi] := (\mathrm{id} + {\mathcal K}_\pm)^{-1}(\tilde{\mathcal R}_\pm
(\,.\,)[\vphi]) \in \Gamma(E^*)$. Then
\begin{eqnarray*}
\Gamma(E^*) &\to& E_x^* \\
\vphi &\mapsto& F^\Omega_\pm(x)[\vphi]
\end{eqnarray*}
is an advanced (retarded) fundamental solution for $P$ at $x$. Indeed, 
\begin{eqnarray*}
P_{(2)}F^\Omega_\pm(\,.\,)[\vphi] &=& F^\Omega_\pm(\,.\,)[P^*\vphi] = 
(\mathrm{id} + {\mathcal K}_\pm)^{-1}(\tilde{\mathcal R}_\pm(\,.\,)[P^*\vphi])  \\
&=& (\mathrm{id} + {\mathcal K}_\pm)^{-1}(P_{(2)}\tilde{\mathcal R}_\pm(\,.\,)[\vphi])
= (\mathrm{id} + {\mathcal K}_\pm)^{-1}(\vphi+{\mathcal K}_\pm\vphi) = \vphi
\end{eqnarray*}

In addition, for each $\vphi\in \D(\Omega,E^*)$, the map $x'\mapsto F^\Omega_\pm(x')[\vphi]$
is in $\Gamma(\Omega,E^*)$.
It can be shown
that the approximate fundamental solution $\tilde {\mathcal R}_\pm(x)$ is in fact an
asymptotic expansion of the true fundamental solution $F^\Omega_\pm(\,.\,)(x)$ in 
a suitable sense. Altogether, we obtain the following result on the solution of the
inhomogeneous problem on small domains:
\begin{thm} For any $x\in M$ there exists a relatively compact causal neighborhood $\Omega$
such that the following holds: given $v\in\D(\Omega,E)$ and defining $u_\pm$ by
$$
\langle u_\pm,\vphi\rangle := \int_\Omega F^\Omega_\pm(x)[\vphi]\cdot v(x)\,dV(x)
\quad (\vphi\in \D(\Omega,E))
$$
we have: $u_\pm \in \Gamma(\Omega,E)$, $Pu_\pm = v$, and $\supp(u_\pm) \subseteq 
J_\pm^\Omega(\supp(v))$.
\end{thm}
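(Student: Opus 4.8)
The plan is to assemble the fundamental solution constructed in the text into a solution of the inhomogeneous equation, using duality to transfer the statement $PF^\Omega_\pm(x)=\delta_x$ into a statement about $u_\pm$. First I would fix $x\in M$ and choose $\Omega$ to be a relatively compact causal neighborhood small enough that all the preceding constructions apply: in particular, small enough that $\mathrm{id}+\mathcal{K}_\pm$ is invertible on each $\Gamma^k(\bar\Omega,E^*)$ with bounded inverse, and that $\overline\Omega$ sits inside a geodesically convex set so that $J^\Omega_\pm$ behaves well. With $v\in\D(\Omega,E)$ given, I would regard the prescription
\begin{equation*}
\langle u_\pm,\vphi\rangle := \int_\Omega F^\Omega_\pm(x')[\vphi]\cdot v(x')\,dV(x')
\end{equation*}
as defining $u_\pm$ and check that it is a well-defined distribution; here the key input is the already-stated fact that for each fixed $\vphi\in\D(\Omega,E^*)$ the map $x'\mapsto F^\Omega_\pm(x')[\vphi]$ is a smooth section of $E^*$, so the integrand is smooth and compactly supported in $x'$ (since $v$ is), and linearity and continuity in $\vphi$ follow from the corresponding properties of $F^\Omega_\pm$.

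Next I would verify $Pu_\pm=v$. This is the heart of the argument and is where I expect to do the real work. Testing $Pu_\pm$ against $\vphi\in\D(\Omega,E^*)$ means evaluating $\langle u_\pm,P^*\vphi\rangle$, which by the definition above equals $\int_\Omega F^\Omega_\pm(x')[P^*\vphi]\cdot v(x')\,dV(x')$. The computation already carried out in the excerpt shows $F^\Omega_\pm(x')[P^*\vphi]=\vphi(x')$ for every $x'$, i.e. that $F^\Omega_\pm(x')$ is genuinely a fundamental solution at $x'$. Substituting this gives $\langle u_\pm,P^*\vphi\rangle=\int_\Omega \vphi(x')\cdot v(x')\,dV(x')=\langle v,\vphi\rangle$, which is exactly $\langle Pu_\pm,\vphi\rangle$. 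The regularity claim $u_\pm\in\Gamma(\Omega,E)$ should then follow either from recognizing $u_\pm$ as a superposition against smooth kernels, or from elliptic/hyperbolic regularity for $P$ once we know $Pu_\pm=v$ is smooth; I would make precise that the smooth dependence of $F^\Omega_\pm(x')[\vphi]$ on $x'$, together with smoothness of $v$, yields a smooth section.

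Finally I would establish the support statement $\supp(u_\pm)\subseteq J^\Omega_\pm(\supp(v))$. The plan is to use the support property $\supp(F^\Omega_\pm(x'))\subseteq J^\Omega_\pm(x')$ inherited from the Riesz distributions $R^\Omega_\pm(\alpha,x')$ and preserved through the Neumann-series correction (since $\mathcal{K}_\pm$ is built from kernels supported in the appropriate causal region). Concretely, if $\vphi$ has support disjoint from $J^\Omega_\pm(\supp v)$, then for each $x'\in\supp v$ the causal ordering prevents $\supp\vphi$ from meeting $J^\Omega_\pm(x')$, so $F^\Omega_\pm(x')[\vphi]=0$, whence $\langle u_\pm,\vphi\rangle=0$; tracking the causal relations carefully (and using that $\Omega$ is causal, so these cones are well-controlled) gives the stated inclusion.

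The main obstacle I anticipate is not the formal duality computation, which the excerpt has essentially handed us, but the care needed in the support argument: I must make sure the Neumann-series inverse $(\mathrm{id}+\mathcal{K}_\pm)^{-1}$ does not enlarge supports beyond $J^\Omega_\pm$, which hinges on the causal localization of the kernels $K_\pm$ and on $\Omega$ being a sufficiently small causal domain. Establishing that the causal future/past operation behaves well under the integral—effectively a Fubini-type interchange combined with the transitivity of the causal relation on the convex set $\Omega'$—is the step I would treat most carefully.
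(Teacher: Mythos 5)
Your overall architecture is the same as the paper's: define $u_\pm$ by duality against the family $x'\mapsto F^\Omega_\pm(x')$, verify $Pu_\pm=v$ via the displayed computation $F^\Omega_\pm(\,.\,)[P^*\vphi]=\vphi$, and get the support inclusion from $\supp(F^\Omega_\pm(x'))\subseteq J^\Omega_\pm(x')$ together with the fact that the Neumann series does not enlarge causal supports (which indeed rests on the causal localization of $K_\pm$ and transitivity of $\le$ in the convex ambient set; one also needs $J^\Omega_\pm(\supp v)$ to be closed in $\Omega$, which is exactly what the causality of $\Omega$ provides). Those parts of your proposal are correct and match the construction the paper assembles before stating the theorem.

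There is, however, a genuine gap at the regularity claim $u_\pm\in\Gamma(\Omega,E)$, and neither of your two proposed fixes works. First, ``hyperbolic regularity'' is not available: normally hyperbolic operators are \emph{not} hypoelliptic, so $Pu_\pm=v$ with $v$ smooth does not force $u_\pm$ smooth --- e.g.\ $R_+(2)-R_-(2)$ is a non-smooth distributional solution of $\Box w=0$, and Section 3 of the paper only extracts non-characteristic regularity (absence of $\pm(1,0)$ directions in $\WF$) from such an equation, never smoothness. Second, $u_\pm$ is not a ``superposition against smooth kernels'': $F^\Omega_\pm(x')$ is a genuine distribution with singular support on the light cone $C^\Omega_\pm(x')$, and the smoothness of $x'\mapsto F^\Omega_\pm(x')[\vphi]$ for each \emph{fixed} $\vphi$ only guarantees that $u_\pm$ is a well-defined distribution --- it says nothing about regularity in the test-function slot. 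The missing ingredient is precisely the sentence in the paper that $\tilde{\mathcal R}_\pm(x)$ is an \emph{asymptotic expansion} of $F^\Omega_\pm(x)$: in the argument of B\"ar--Ginoux--Pf\"affle one writes $F^\Omega_\pm(x')=\sum_{k\le N}V^k_{x'}R^\Omega_\pm(2+2k,x') + (\text{remainder of class }{\mathcal C}^{j(N)})$, integrates term by term against $v$, and uses the joint regularity and symmetry properties of the Riesz distributions (continuity of $R_\pm(\alpha)$ for $\Re(\alpha)>n$ and the relation between $R^\Omega_+(\alpha,x)(y)$ and $R^\Omega_-(\alpha,y)(x)$, so that the $x'$-integration smooths the singular low-order terms) to conclude $u_\pm\in{\mathcal C}^j$ for every $j$. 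Without this step your argument establishes only $u_\pm\in\D'(\Omega,E)$, $Pu_\pm=v$, and the support inclusion, not the full statement.
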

Turning now to the global theory, the first step is to assure uniqueness of fundamental 
solutions. For this, we need certain restrictions on the causal structure of $M$:
\begin{thm} \label{locuni}
Suppose that $M$ satisfies the causality condition, that the relation $\le$
is closed on $M$, and that the time separation function $\tau: M\times M \to \bar \R$
(see \cite{ON:83}) is finite and continuous. Let $u\in \D'(M,E)$ be a solution of
$Pu=0$ with future or past compact support ($\supp(v)\cap J^M_\pm(p)$ compact for all $p$).
Then $u=0$. 
\end{thm}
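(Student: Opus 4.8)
Assume first that $u$ has past compact support; the future compact case follows by reversing the time orientation, under which $P$ stays normally hyperbolic and all three causality hypotheses are preserved (the causality condition is orientation-free, closedness of $\le$ is equivalent to closedness of the reversed relation, and $\tau$ merely has its arguments swapped). Since a distribution is determined by its values on test sections, it suffices to show $\dis{u}{\vphi}=0$ for every $\vphi\in\D(M,E^*)$. The strategy is to \emph{test against a solution of the adjoint equation}: observe that $P^*$ is again normally hyperbolic (its principal symbol is also $\xi\mapsto -\langle\xi,\xi\rangle\,\mathrm{id}$), so the local existence theorem above applies verbatim to $P^*$.

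The plan is to produce a smooth section $\psi\in\Gamma(M,E^*)$ with $P^*\psi=\vphi$ and $\supp\psi\subseteq J^M_-(\supp\vphi)$. Granting this, past compactness of $\supp u$ makes $C:=\supp u\cap J^M_-(\supp\vphi)$ compact, hence so is $\supp u\cap\supp\psi\subseteq C$. Choosing $\chi\in\D(M)$ with $\chi\equiv1$ on a neighbourhood of this compact set, we split $\vphi=P^*\psi=P^*(\chi\psi)+P^*((1-\chi)\psi)$. The first summand has the compactly supported potential $\chi\psi$, so $\dis{u}{P^*(\chi\psi)}=\dis{Pu}{\chi\psi}=0$; the second is supported inside $\supp\psi$ but outside the neighbourhood of $C$ on which $\chi\equiv1$, hence disjoint from $\supp u$, so that $\dis{u}{P^*((1-\chi)\psi)}=0$ as well. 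Adding the two pieces gives $\dis{u}{\vphi}=0$, as desired. This reduces everything to the construction of $\psi$.

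To build $\psi$ I would patch together the local (retarded) solutions supplied by the local existence theorem. Covering the compact set $\supp\vphi$ and the part of its causal past meeting $\supp u$ by finitely many relatively compact causal domains $\Om_1,\dots,\Om_N$, one solves $P^*\psi_i=\vphi_i$ on each $\Om_i$ (for a subordinate decomposition $\vphi=\sum_i\vphi_i$) with $\supp\psi_i\subseteq J^{\Om_i}_-(\supp\vphi_i)$, and then propagates the solution toward the past, matching the pieces on overlaps by local uniqueness on small causal domains. Here the three hypotheses enter decisively: the causality condition makes $\le$ a genuine order (no closed causal loops), while closedness of $\le$ together with finiteness and continuity of $\tau$ guarantee that causal diamonds---and in particular the sets $J^M_-(K)\cap\supp u$ for compact $K$---are compact, so that the covering is finite and the inductive gluing along the causal ordering is well founded and terminates.

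The main obstacle is precisely this global patching: assembling the local adjoint solutions into a single smooth $\psi$ whose support stays inside $J^M_-(\supp\vphi)$. Matching on overlaps requires a local uniqueness statement for $P^*$ on causal domains, which follows from the fundamental-solution construction (e.g.\ from an energy estimate or from the explicit parametrix $\tilde{\mathcal R}_\pm$ together with the invertibility of $\mathrm{id}+{\mathcal K}_\pm$), while controlling the support propagation and ensuring the induction is well-ordered is exactly what forces the causality hypotheses. Once $\psi$ is in hand, the cutoff argument above closes the proof.
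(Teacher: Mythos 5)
The paper itself contains no proof of Theorem \ref{locuni}: Section \ref{SCP} is a survey and this statement is quoted from \cite{BGP:07} (Thm.\ 3.1.1), so your attempt must be measured against the proof there. Your outer mechanism is correct and is in fact the standard one: reduce the future compact case to the past compact case by reversing time orientation; test $u$ against solutions $\psi$ of the adjoint equation $P^*\psi=\vphi$ supported in $J^M_-(\supp\vphi)$; and neutralize the non-compactness of $\supp\psi$ by the splitting $\vphi=P^*(\chi\psi)+P^*\bigl((1-\chi)\psi\bigr)$ with $\chi\equiv 1$ near the compact set $\supp u\cap J^M_-(\supp\vphi)$. Your observation that this set is compact (past compactness of $\supp u$ together with closedness of $\le$, which also makes $J^M_-(K)$ closed for compact $K$) is correct, and the cutoff computation $\dis{u}{P^*(\chi\psi)}=\dis{Pu}{\chi\psi}=0$ is exactly the device used in \cite{BGP:07}.

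The genuine gap is the step you yourself call ``the main obstacle'': producing a single smooth $\psi\in\Gamma(M,E^*)$ with $P^*\psi=\vphi$ and $\supp\psi\subseteq J^M_-(\supp\vphi)$ globally on $M$. Under the stated hypotheses (causality condition, closed $\le$, finite continuous $\tau$) --- strictly weaker than global hyperbolicity --- no such global existence theorem is available; it amounts to the existence of an advanced Green operator for $P^*$, which the theory provides only on globally hyperbolic manifolds (Th.\ \ref{Greenop}) and which, in the logical order of both the paper and \cite{BGP:07}, is proved \emph{after} and \emph{by means of} the present uniqueness theorem, so invoking it would be circular. Your proposed repair by patching does not work: for a decomposition $\vphi=\sum_i\vphi_i$ the local solutions satisfy \emph{different} equations $P^*\psi_i=\vphi_i$, so ``matching on overlaps by local uniqueness'' compares sections that do not solve the same problem; even for a common source, $\Om_i\cap\Om_j$ need not be a causal domain, and the restrictions of $\psi_i$, $\psi_j$ carry no support condition relative to $\Om_i\cap\Om_j$ to which a local uniqueness statement could apply; and ``propagating toward the past'' would require solving Cauchy problems across hypersurfaces that are not Cauchy hypersurfaces of the small domains, with data that need not be compactly supported there. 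The proof in \cite{BGP:07} avoids constructing any global $\psi$: roughly, it runs an induction that only ever uses local solvability and uniqueness on RCCSV domains, creeping along the compact set $\supp u\cap J^M_-(\supp\vphi)$ in finitely many steps whose ``height'' is uniformly bounded below in the time separation --- continuity and finiteness of $\tau$ on that compact set furnish the uniform step, closedness of $\le$ the compactness statements, and the causality condition the well-foundedness of the induction --- at each step re-testing $u$ against test sections supported strictly further in the past until the supports separate. So your reduction and cutoff argument are sound, but the construction you defer to a sketch is the entire analytic and causal content of the theorem, and the gluing you outline would fail as stated.
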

As an immediate corollary we obtain that under the above assumptions for each $x\in M$
there is at most one fundamental solution for $P$ at $x$ with past (future) compact
support. The causality conditions in Th.\ \ref{locuni} are satisfied if $M$ is globally
hyperbolic. Under this assumption, by Th.\ \ref{BS} there exists a spacelike Cauchy
hypersurface $S$ in $M$. We denote by $\hat\xi$ the future directed timelike unit normal vector 
field on $S$ and consider the following Cauchy problem:
\begin{eqnarray} \label{CP}
Pu &=& f \quad \text{ on } M \nonumber\\
u &=& u_0 \quad \text{ on } S\\
\nabla_{\hat\xi} u &=& u_1 \quad \text{ on } S \nonumber
\end{eqnarray}
where $f\in \D(M,E)$, and $u_0,\, u_1 \in \D(S,E)$. One first notes that for vanishing $f$, $u_0$,
and $u_1$, this problem only has the trivial solution. The analytical core of this result is 
the observation that for each $\psi\in \Gamma(E^*)$ and each $v\in \Gamma(E)$, one has
\begin{equation}\label{diveq}
\psi\cdot (Pv) - (P^*\psi)\cdot v = \div (W)
\end{equation}
where the vector field $W$ is uniquely characterized by
$$
\langle W,X\rangle = (\nabla_X \psi)\cdot v - \psi\cdot (\nabla_X v) \qquad (X\in {\mathfrak X}(M))\,.
$$
This allows to control the solution of the homogeneous equation $Pu=0$ by the Cauchy data on any Cauchy 
hypersurface. To prove existence of solutions one uses the above local theory to obtain solutions for $f$, $u_0$ and $u_1$
supported in sufficiently small causal domains. More precisely, we call a causal domain $\Omega$ an RCCSV-domain (for {\em relatively compact causal with small volume}) if it
is relatively compact and so small that $\mathrm{vol}(\overline\Omega)\cdot
\|K_\pm\|_{{\mathcal C}^0(\overline\Omega\times \overline\Omega)} < 1$. Then we have: 
\begin{prop}\label{RCCSV}
Let $\Omega$ be an RCCSV-domain and suppose that
$f$, $u_0$ and $u_1$ are compactly supported in $\Omega$ (resp.\ $\Omega\cap S$). Then
the corresponding Cauchy problem in $\Omega$ is uniquely solvable.
\end{prop}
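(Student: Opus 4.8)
The plan is to establish existence and uniqueness separately on the RCCSV-domain $\Omega$, treating the data $(f,u_0,u_1)$ and reducing everything to the inhomogeneous solution operator furnished by the preceding theorem. Uniqueness is the easier half: since $\Omega$ is in particular causal and relatively compact, it is globally hyperbolic in its own right and hence satisfies the hypotheses of Theorem \ref{locuni}. Thus a solution of the homogeneous problem ($f=0$, $u_0=u_1=0$) with support controlled by $\Omega$ must vanish, which gives uniqueness for the full problem by linearity. The real work is in the divergence identity \eqref{diveq}: given a solution $u$ of $Pu=0$ with vanishing Cauchy data on $S\cap\Omega$, one pairs $u$ against a test section $\psi$, integrates the identity $\psi\cdot(Pu)-(P^*\psi)\cdot u=\div(W)$ over the causal diamond, and applies the divergence theorem. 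The boundary terms on $S$ vanish precisely because $u$ and $\nabla_{\hat\xi}u$ are zero there, while the terms on the null boundary of the diamond are handled by the support properties of $J^\Omega_\pm$; this forces $\langle u,P^*\psi\rangle=0$ for enough $\psi$ to conclude $u=0$.

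For existence I would reduce the inhomogeneous Cauchy problem to one with homogeneous initial data and a modified right-hand side. First choose any smooth section $w$ on $\Omega$ with $w=u_0$ and $\nabla_{\hat\xi}w=u_1$ on $S\cap\Omega$ (a jet extension, solvable because $S$ is a spacelike hypersurface with the standard collar/tubular-neighborhood structure from Theorem \ref{BS}(iii)); replacing $u$ by $u-w$ converts the problem into $Pu'=f-Pw=:\tilde f$ with $u'=0$, $\nabla_{\hat\xi}u'=0$ on $S$. Next, split $\tilde f$ along $S$ into its future and past parts $\tilde f = \tilde f_+ + \tilde f_-$, where $\supp(\tilde f_\pm)\subseteq J^\Omega_\pm(S\cap\Omega)$, and apply the inhomogeneous solution theorem to each piece to obtain $u'_\pm$ with $Pu'_\pm=\tilde f_\pm$ and $\supp(u'_\pm)\subseteq J^\Omega_\pm(\supp\tilde f_\pm)$. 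Setting $u'=u'_++u'_-$ gives $Pu'=\tilde f$, and the support conditions guarantee that both $u'$ and $\nabla_{\hat\xi}u'$ vanish on $S$, since the two summands meet $S$ only along $S\cap\Omega$ and their contributions there cancel by construction; finally $u:=u'+w$ solves the original problem.

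The main obstacle I anticipate is not the abstract scheme but the careful bookkeeping of supports across the Cauchy surface, specifically ensuring that the split solution $u'=u'_++u'_-$ genuinely attains zero Cauchy data rather than merely continuous-but-nonsmooth matching along $S$. The RCCSV-condition $\mathrm{vol}(\overline\Omega)\cdot\|K_\pm\|_{{\mathcal C}^0} < 1$ is exactly what makes the Neumann series for $(\mathrm{id}+{\mathcal K}_\pm)^{-1}$ converge and hence keeps $u'_\pm$ smooth up to $S$, so one must verify that the regularity and support estimates from the local inhomogeneous theorem survive the splitting and recombination. Establishing that the traces $u'|_S$ and $(\nabla_{\hat\xi}u')|_S$ vanish requires propagating the support inclusion $\supp(u'_\pm)\subseteq J^\Omega_\pm(\supp\tilde f_\pm)$ down to the level of the first-order jet along $S$; this is the step where the causal geometry of $\Omega$ and the precise form of the fundamental solutions $F^\Omega_\pm$ must be used most delicately.
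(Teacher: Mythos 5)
Your overall architecture is the same as that of the standard proof (the one in B\"ar--Ginoux--Pf\"affle which the paper is reviewing): uniqueness via the divergence identity \eqref{diveq} tested against $\psi = G^*_\pm\vphi$, existence by subtracting an extension $w$ of the data and splitting the resulting source into future and past parts handled by the local inhomogeneous theorem. But the existence half has a genuine gap at precisely the point you flagged, and it is not a matter of bookkeeping: with $w$ matching only the first-order jet of the data along $S\cap\Omega$, the modified source $\tilde f = f - Pw$ is in general nonzero on $S\cap\Omega$, and then the decomposition you need simply does not exist. Indeed, if $\tilde f_+$ is smooth with $\supp(\tilde f_+)\subseteq J^\Omega_+(S\cap\Omega)$, then $\tilde f_+$ vanishes identically on the open set $I^\Omega_-(S\cap\Omega)$, whose closure contains $S\cap\Omega$ (as $S$ is spacelike), so $\tilde f_+$ vanishes \emph{to infinite order} on $S\cap\Omega$; the same holds for $\tilde f_-$, hence any sum $\tilde f_+ + \tilde f_-$ vanishes to infinite order on $S\cap\Omega$. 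So your splitting is possible if and only if $\tilde f$ already vanishes to infinite order there, which a first-order jet extension does not achieve --- the naive truncations of $\tilde f$ at $S$ are not even continuous, and the local inhomogeneous theorem (stated for $v\in\D(\Omega,E)$) does not apply to them.

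The missing idea is to choose $w$ solving the Cauchy problem \emph{to infinite order} along $S$: since $P$ is normally hyperbolic, the equation $Pu=f$ expresses the second transversal derivative of $u$ on $S$ in terms of tangential derivatives, lower-order transversal derivatives, and $f$, so all derivatives $\nabla^k_{\hat\xi}u|_S$ are determined recursively by $(f,u_0,u_1)$; a Borel-type construction then produces a smooth, compactly supported $w$ realizing this full jet, whence $f-Pw$ vanishes to infinite order along $S\cap\Omega$. After this correction your scheme closes exactly as you intend: the sharp truncations $\tilde f_\pm$ are smooth and compactly supported with $\supp(\tilde f_\pm)\subseteq J^\Omega_\pm(S\cap\Omega)$, the local theorem gives $u'_\pm$ with $\supp(u'_\pm)\subseteq J^\Omega_\pm(\supp \tilde f_\pm)$, and each $u'_\pm$ separately has vanishing full jet on $S\cap\Omega$ (being zero on $I^\Omega_\mp(S\cap\Omega)$), so no ``cancellation'' between the two summands is needed --- nor would it be available. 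A smaller quibble on uniqueness: you cannot invoke Theorem \ref{locuni} directly, since a homogeneous solution with vanishing Cauchy data is not known a priori to have future or past compact support; but this does no harm, because the divergence-identity argument you then sketch (integrating \eqref{diveq} with $\psi=G^*_\pm\vphi$, so that $P^*\psi=\vphi$ and the boundary terms on $S$ vanish) is the correct and standard route, and is exactly the ``analytical core'' the paper points to.
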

Combined with rather subtle causality arguments, this local result
finally leads to the following main theorem on existence and uniqueness of solutions to (\ref{CP}):
\begin{thm} \label{mainwave}
Let $E$ be a vector bundle over a globally hyperbolic Lorentz manifold $M$ with  a spacelike Cauchy hypersurface $S$, and
let $P$ be normally hyperbolic on $\Gamma(E)$. Then for each $f\in \D(M,E)$ and each $u_0$, $u_1 \in \D(S,E)$ there 
exists a unique solution $u\in \Gamma(E)$ satisfying (\ref{CP}). Moreover, $\supp(u) \subseteq
J^M(\supp(f)\cup \supp(u_0) \cup \supp(u_1))$  and $u$ depends continuously on $(f,u_0,u_1)$.
\end{thm}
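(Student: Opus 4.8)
The plan is to prove the statement in three stages—uniqueness by an energy estimate, existence by globalising the local solutions of Prop.~\ref{RCCSV}, and the support/continuity bounds—after first folding the Cauchy data into the right hand side. Begin with \emph{uniqueness}: by linearity it suffices to treat $f=0$, $u_0=u_1=0$, the case already noted around (\ref{diveq}). Fix the splitting $M\cong(\R\times S,-\beta\,dt^2+g_t)$ from Th.~\ref{BS} and a fibre metric on $E$ compatible with $\nabla$, and from a solution $u$ form an energy density built out of $u$ and $\nabla u$. For $q\in M$ apply the divergence identity (\ref{diveq}) with $\psi$ adapted to $u$ and integrate $\div(W)$ over the truncated solid light cone with apex $q$ and base in $S$, which is compact by global hyperbolicity; the divergence theorem converts this into an energy balance between $S$ and the slices below $q$, the lateral null boundary contributing a flux of favourable sign, while $Pu=0$ makes the bulk term bounded by the energy itself. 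A Grönwall inequality $E(t)\le C\int_0^t E(s)\,ds$ with $E(0)=0$ then forces the energy, hence $u$, to vanish at $q$; as $q$ is arbitrary, $u\equiv0$. The causality hypotheses of Th.~\ref{locuni} hold automatically here, $M$ being globally hyperbolic.

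Next I reduce to vanishing data. Choose $w\in\D(M,E)$ supported in a collar $(-\delta,\delta)\times S$ with $w|_S=u_0$ and $\nabla_{\hat\xi}w|_S=u_1$—possible since $\hat\xi$ is transverse to $S$ and $u_0,u_1$ are compactly supported—so that $v:=u-w$ must solve $Pv=f-Pw=:\tilde f\in\D(M,E)$ with zero Cauchy data. For \emph{existence} of such a $v$ I would first patch local solutions into a thin slab: near any point the value of a solution depends, by global hyperbolicity, only on data in a compact causal diamond, so covering the relevant compact region by finitely many RCCSV-domains and solving on each via Prop.~\ref{RCCSV}, the energy-estimate uniqueness makes the pieces agree on overlaps and glue to a solution on $(-\eps,\eps)\times S$. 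A continuation argument then takes over: letting $T$ be the supremum of times to which $v$ extends, re-running the slab construction on a leaf just below $T$ shows the solution always extends a bit further, whence $T=+\infty$ (and likewise downward), producing a global $v\in\Gamma(E)$; then $u:=v+w$ solves (\ref{CP}).

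The \emph{support} bound is inherited from the local estimate $\supp(u_\pm)\subseteq J^\Omega_\pm(\supp v)$, which upon patching gives $\supp(u)\subseteq J^M(\supp f\cup\supp u_0\cup\supp u_1)$. \emph{Continuous dependence} of $u$ on $(f,u_0,u_1)$ follows from the energy estimate, which bounds the relevant seminorms of $u$ on each slab by those of the data, uniformly on compact sets.

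The step I expect to be hardest is the globalisation. The obstruction is not analytic but causal-geometric: one must check that the local pieces patch consistently (this rests on the energy-estimate uniqueness), that supports stay controlled so the patched section is well defined, and that the time-continuation never stalls at a finite value. This is exactly where global hyperbolicity is used in full strength, through the Cauchy foliation of Th.~\ref{BS} and the compactness of causal diamonds.
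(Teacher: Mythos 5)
Your proposal is correct in outline and, for the existence part, follows essentially the same route as the paper (which here only sketches the argument and defers to B\"ar--Ginoux--Pf\"affle \cite{BGP:07}): local solvability on RCCSV-domains via Prop.~\ref{RCCSV}, then globalization along the Cauchy foliation of Th.~\ref{BS} using compactness of causal diamonds --- exactly the ``rather subtle causality arguments'' the paper alludes to. Where you genuinely diverge is in uniqueness and continuity. The paper (following \cite{BGP:07}) obtains uniqueness by a duality argument: the divergence identity (\ref{diveq}) is integrated against solutions $\psi$ of the \emph{adjoint} problem, so that vanishing Cauchy data force $\langle u,\vphi\rangle=0$ for all test sections; continuity is then obtained by open-mapping/closed-graph arguments rather than quantitative bounds. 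You instead run a classical stress-energy/Gr\"onwall estimate over truncated light cones --- closer in spirit to the $\eps$-dependent energy estimates of Sections \ref{local}--\ref{global} of this paper than to \cite{BGP:07}. That route is viable and buys quantitative control (useful precisely for the continuity statement), but needs two repairs: (i) a fibre metric on $E$ compatible with the Weitzenb\"ock connection $\nabla$ of $P$ need not exist, so you must take an arbitrary fibre metric and absorb the incompatibility terms, along with the $B$-terms, into the Gr\"onwall inequality; moreover (\ref{diveq}) with $\psi$ built from $u$ via a fibre metric yields only a zeroth-order ($L^2$) identity, so the first-order energy identity must come from the stress-energy tensor itself, not from (\ref{diveq}); (ii) continuity of $(f,u_0,u_1)\mapsto u$ in the $\Cinf$-topology of $\Gamma(E)$ requires higher-order energy estimates, not just the basic one. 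Your self-diagnosis of the hardest step is accurate: in the slab construction one must decompose the data by a finite partition of unity, solve each piece on its RCCSV-domain, keep a uniformly positive slab height over the (compact, causally controlled) support at each time, and iterate --- this is the content compressed in the paper into ``subtle causality arguments'' and carried out in \cite{BGP:07}, and, in the generalized setting, via the uniform lens-height argument of \cite{dude} used in Section \ref{global}.
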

This result can immediately be utilized to show existence and uniqueness of fundamental solutions:
\begin{thm} \label{fundsol}
Under the assumptions of Th.\  \ref{mainwave}, for each $x\in M$ there exists a unique fundamental solution
$F_+(x)$ ($F_-(x)$) for $P$ at $x$ with past (future) compact support. These fundamental solutions satisfy
\begin{itemize}
\item[(i)] $\supp(F_\pm(x)) \subseteq J_\pm^M(x)$.
\item[(ii)] $\forall \vphi \in \D(M,E^*)$, $x\mapsto F_\pm(x)[\vphi] \in \Gamma(E^*)$ and $P^*(F_\pm(\,.\,)[\vphi])
= \vphi$.
\end{itemize}
\end{thm}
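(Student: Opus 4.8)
The plan is to reduce everything to the well-posedness of the Cauchy problem (Theorem~\ref{mainwave}) applied to the formal adjoint. Since the principal symbol of $P^*$ coincides with that of $P$, the operator $P^*\colon\Gamma(E^*)\to\Gamma(E^*)$ is again normally hyperbolic, so Theorems~\ref{BS}, \ref{locuni} and \ref{mainwave}, as well as the local inhomogeneous existence result stated above, are all available for $P^*$ on the bundle $E^*\to M$. The central device will be the pair of advanced and retarded \emph{Green operators} $G^*_\pm\colon\D(M,E^*)\to\Gamma(E^*)$ for $P^*$, characterised by $P^*G^*_\pm=\mathrm{id}$ and $G^*_\pm P^*=\mathrm{id}$ on $\D(M,E^*)$ together with the sharp one-sided support estimate $\supp(G^*_\pm\vphi)\subseteq J^M_\pm(\supp\vphi)$. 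Granting these, I will define
\[
F_\pm(x)[\vphi]:=(G^*_\mp\vphi)(x)\in E_x^*\qquad(\vphi\in\D(M,E^*)),
\]
the point being that the desired support condition $\supp(F_+(x))\subseteq J^M_+(x)$ is, read as a condition in $x$, exactly the past-support property $\supp(G^*_-\vphi)\subseteq J^M_-(\supp\vphi)$ of the retarded operator (and symmetrically for $F_-$).

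Constructing $G^*_\pm$ is the first and main task. For existence I would proceed as follows: given $\vphi\in\D(M,E^*)$, the splitting of Theorem~\ref{BS}(iii) provides a spacelike Cauchy hypersurface lying to the future of the compact set $\supp\vphi$; solving the Cauchy problem for $P^*$ with right hand side $\vphi$ and vanishing data on this hypersurface via Theorem~\ref{mainwave} produces a section $\psi\in\Gamma(E^*)$ with $P^*\psi=\vphi$ that vanishes on the future of the hypersurface, by uniqueness of the homogeneous Cauchy problem. This candidate for $G^*_-\vphi$ automatically satisfies $P^*G^*_-=\mathrm{id}$. Independence of the auxiliary hypersurface and the second Green identity $G^*_-P^*=\mathrm{id}$ both follow from the uniqueness Theorem~\ref{locuni}: two competing solutions differ by a solution of the homogeneous equation with past compact support, which must vanish; and for $\vphi\in\D(M,E^*)$ both $G^*_-(P^*\vphi)$ and $\vphi$ solve $P^*(\,\cdot\,)=P^*\vphi$ with past compact support, hence coincide. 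Linearity is clear and continuity is inherited from the continuous dependence asserted in Theorem~\ref{mainwave}.

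With $G^*_\pm$ in hand, the assertions follow quickly. The map $\vphi\mapsto(G^*_\mp\vphi)(x)$ is linear and continuous, so $F_\pm(x)$ is a well-defined element of $\D'(M,E,E_x^*)$; its support lies in $J^M_\pm(x)$ by the equivalence noted above, and since the causal diamonds $J^M_+(x)\cap J^M_-(p)$ are compact in the globally hyperbolic $M$, $F_+(x)$ has past compact support and $F_-(x)$ future compact support. The fundamental-solution identity is the one-line computation $\langle F_+(x),P^*\vphi\rangle=(G^*_-P^*\vphi)(x)=\vphi(x)$, i.e.\ $PF_+(x)=\delta_x$, establishing~(i); and (ii) is precisely the statement that $x\mapsto F_+(x)[\vphi]=(G^*_-\vphi)(x)$ is a smooth section of $E^*$ with $P^*(F_+(\,\cdot\,)[\vphi])=P^*G^*_-\vphi=\vphi$, smoothness in $x$ being that of the Cauchy solution in its data and source. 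Uniqueness of $F_\pm(x)$ among fundamental solutions with past (future) compact support is exactly the corollary to Theorem~\ref{locuni} already recorded.

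The real obstacle is not the bookkeeping above but the sharp one-sided support estimate $\supp(G^*_\pm\vphi)\subseteq J^M_\pm(\supp\vphi)$. A single application of Theorem~\ref{mainwave} with zero data on one Cauchy hypersurface only confines the support to $J^M(\supp\vphi)$ with the far future (or past) removed, which is strictly larger than the one-sided causal shadow. Pinning the support to a single causal side requires the sharp local estimate $\supp(u_\pm)\subseteq J^\Omega_\pm(\supp v)$ from the local inhomogeneous theorem stated above, propagated globally and reconciled across overlaps by the uniqueness Theorem~\ref{locuni}; this is the delicate causality argument on which the whole construction rests and which I would expect to absorb the bulk of the work.
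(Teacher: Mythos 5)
Your overall architecture coincides with the derivation the paper intends (following B\"ar--Ginoux--Pf\"affle): apply Th.~\ref{mainwave} to the normally hyperbolic adjoint $P^*$ with right hand side $\vphi$ and vanishing data on a Cauchy hypersurface chosen beyond $\supp\vphi$, and get uniqueness of $F_\pm(x)$ from the corollary to Th.~\ref{locuni}. The only structural difference is ordering: you build the Green operators $G^*_\pm$ first and derive the fundamental solutions from them, whereas the paper states Th.~\ref{fundsol} first and then obtains Th.~\ref{Greenop} via $(G_\pm\vphi)(x)=F_\mp(x)[\vphi]$; this reorganization is harmless. Your bookkeeping steps are sound: in particular, hypersurface-independence and $G^*_-P^*=\mathrm{id}$ do follow from Th.~\ref{locuni}, since in both cases the crude bound $\supp\chi_\vphi\subseteq J^M(\supp\vphi)$ together with vanishing beyond the chosen hypersurface gives the difference one-sided compact support (using compactness of sets of the form $J^M_+(K)\cap J^M_-(S_t)$ in a globally hyperbolic manifold).

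There is, however, a genuine gap exactly where you place it: the one-sided estimate $\supp(G^*_\pm\vphi)\subseteq J^M_\pm(\supp\vphi)$ is asserted but not proved, and every statement specific to Th.~\ref{fundsol} rests on it. Without it, the crude support bound only yields $\supp F_+(x)\subseteq J^M_+(x)\cup J^M_-(x)$, so neither (i) nor even the past compactness of $\supp F_+(x)$ --- i.e.\ the very membership of $F_+(x)$ in the class within which your uniqueness argument operates --- is established. Moreover, the route you sketch for closing it (propagating the local estimate $\supp(u_\pm)\subseteq J^\Omega_\pm(\supp v)$ globally and reconciling overlaps) is not how the cited source proceeds and runs into the obstruction that the local solutions are supported up to $\partial\Omega$, so they neither extend by zero nor glue in any obvious way. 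The standard argument is instead a pure uniqueness/domain-of-dependence argument requiring no new existence input: if $z\notin J^M_-(\supp\vphi)$, then the closed set $J^M_+(z)$ is disjoint from the compact set $\supp\vphi$, so $\chi_\vphi:=G^*_-\vphi$ solves the homogeneous equation on a neighborhood of $J^M_+(z)$ and vanishes, together with its normal derivative, on a Cauchy hypersurface in the far future of $\supp\vphi$; the energy identity \eqref{diveq} (equivalently, Th.~\ref{locuni} applied on the causally convex, hence globally hyperbolic, open subset $M\setminus J^M_-(\supp\vphi)$, noting that every future-directed causal curve from $z$ stays in this set and reaches the Cauchy hypersurface) then forces $\chi_\vphi(z)=0$. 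Supplying this argument is what is needed to complete your proof; the rest stands.
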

The corresponding fundamental kernels are called Green operators:
\begin{thm} \label{Greenop}
Under the assumptions of Th.\  \ref{mainwave}, there exist unique Green operators $G_\pm: \D(M,E) \to \Gamma(M,E)$
satisfying
\begin{itemize}
\item[(i)] $P\circ G_\pm =\mathrm{id}_{\D(M,E)}$
\item[(ii)] $G_\pm\circ P |_{\D(M,E)} = \mathrm{id}_{\D(M,E)}$
\item[(iii)] $\forall \vphi \in \D(M,E):$ $\supp(G_\pm \vphi)\subseteq J^M_\pm(\supp(\vphi))$
\end{itemize}
In fact, $(G_\pm\vphi)(x) = F_\mp(x)[\vphi]$. Moreover, denoting by $G^*_\pm$ the Green operators for $P^*$,
$$
\int_M(G^*_\pm\vphi)\cdot \psi \, dV = \int_M \vphi\cdot (G_\mp\psi)\,dV \qquad(\vphi\in \D(M,E^*),\, \psi\in \D(M,E))\,.
$$
\end{thm}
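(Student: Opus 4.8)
The plan is to read off the Green operators directly from the fundamental solutions furnished by Theorem \ref{fundsol}, applied once to $P$ and once to its formal adjoint $P^*$, which is again normally hyperbolic (its principal symbol is also $-\langle\xi,\xi\rangle\cdot\mathrm{id}$, now on $E^*$), so that Theorem \ref{fundsol} is available for it. Concretely, I would \emph{define} $G_\pm\colon \D(M,E)\to\Gamma(M,E)$ by $(G_\pm\vphi)(x):=F_\mp(x)[\vphi]$, where, in order for the pairing to land in $E$, the symbol $F_\mp(x)$ denotes the fundamental solution of $P^*$ at $x$; this is valued in $(E^*)^*_x=E_x$ and pairs with $\D(M,E)$, so $F_\mp(x)[\vphi]\in E_x$. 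With this definition properties (i) and (iii) are immediate. Theorem \ref{fundsol}(ii), read for $P^*$, says that $x\mapsto F_\mp(x)[\vphi]$ is a smooth section and that $P\big(F_\mp(\,\cdot\,)[\vphi]\big)=\vphi$, i.e.\ $G_\pm\vphi\in\Gamma(M,E)$ and $P\circ G_\pm=\mathrm{id}$, which is (i); and Theorem \ref{fundsol}(i) gives $\supp(F_\mp(x))\subseteq J^M_\mp(x)$, so $(G_\pm\vphi)(x)\neq 0$ forces $J^M_\mp(x)\cap\supp(\vphi)\neq\emptyset$, that is $x\in J^M_\pm(\supp(\vphi))$, which is (iii).

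The remaining identity (ii) and the uniqueness statement both reduce to the uniqueness theorem \ref{locuni}, whose hypotheses hold because $M$ is globally hyperbolic. For (ii), given $\vphi\in\D(M,E)$ I would set $w:=G_\pm(P\vphi)-\vphi$; by (i) we have $Pw = P\vphi - P\vphi = 0$, while $\supp(w)\subseteq J^M_\pm(\supp(\vphi))$ is past (resp.\ future) compact, since in a globally hyperbolic manifold $J^M_\pm(K)\cap J^M_\mp(p)$ is compact for every compact $K$ and every $p$. Theorem \ref{locuni} then yields $w=0$, i.e.\ $G_\pm\circ P|_{\D(M,E)}=\mathrm{id}$. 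Uniqueness is entirely analogous: if $\tilde G_\pm$ is a second operator satisfying (i)--(iii), then for each $\vphi$ the difference $G_\pm\vphi-\tilde G_\pm\vphi$ solves $Pu=0$ with support in the past/future compact set $J^M_\pm(\supp(\vphi))$, hence vanishes by \ref{locuni}.

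Finally, for the duality relation I would use that the Green operators $G^*_\pm$ for $P^*$ are produced by the very same construction (run for $P^*$), together with the characterisation of the formal adjoint. For $\vphi\in\D(M,E^*)$ and $\psi\in\D(M,E)$, writing $\psi=P(G_\mp\psi)$ via $P\circ G_\mp=\mathrm{id}$ and using $P^*\circ G^*_\pm=\mathrm{id}$, I would compute
\begin{equation*}
\int_M (G^*_\pm\vphi)\cdot\psi\,dV
=\int_M (G^*_\pm\vphi)\cdot P(G_\mp\psi)\,dV
=\int_M P^*(G^*_\pm\vphi)\cdot (G_\mp\psi)\,dV
=\int_M \vphi\cdot (G_\mp\psi)\,dV.
\end{equation*}
The crucial middle equality is the integration-by-parts identity defining $P^*$; it is legitimate with no boundary contribution precisely because $\supp(G^*_\pm\vphi)\subseteq J^M_\pm(\supp(\vphi))$ and $\supp(G_\mp\psi)\subseteq J^M_\mp(\supp(\psi))$ meet in the compact set $J^M_\pm(\supp(\vphi))\cap J^M_\mp(\supp(\psi))$, so the integrand is compactly supported and the defining relation of the formal adjoint applies verbatim.

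I expect the only genuine obstacle to be the causal and support bookkeeping rather than any new analysis: the entire analytic difficulty (construction, smoothness in the base point, and the sharp support bounds of the fundamental solutions) is already packaged in Theorem \ref{fundsol}, and every step above turns on the single geometric fact that sets of the form $J^M_+(K_1)\cap J^M_-(K_2)$ with $K_i$ compact are compact in a globally hyperbolic space-time. The most error-prone point is keeping the $E$/$E^*$ dualities and the $\pm$/$\mp$ correspondences consistent when passing between $P$ and $P^*$.
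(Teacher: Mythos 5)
Your proposal is correct and is essentially the paper's own route (following B\"ar--Ginoux--Pf\"affle): the formula $(G_\pm\vphi)(x)=F_\mp(x)[\vphi]$ stated in the theorem is exactly your definition, with $F_\mp$ the fundamental solutions of the (again normally hyperbolic) adjoint $P^*$ provided by Th.~\ref{fundsol}, and your verifications of (i), (iii), uniqueness, and the duality identity (with the compact-overlap/cutoff justification) are the standard ones. The only small detour is your appeal to Th.~\ref{locuni} for (ii): it follows immediately from the defining property $P^*F_\mp(x)=\delta_x$, which says precisely $F_\mp(x)[P\vphi]=\vphi(x)$, though your uniqueness argument is also valid since $J^M_\pm(\supp\vphi)$ is past/future compact in a globally hyperbolic manifold.
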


\section{Wave equations on smooth Lorentzian \\ manifolds: the case of distributional data}\label{dcp}

In this section we will be concerned with the global Cauchy problem in the case where the metric is
still smooth but the data and right hand side are distributional. To keep the presentation simple we will
restrict our attention to the wave operator.

Based on Th.\ \ref{BS}, we will assume that $M=\R\times S$ and the Lorentz metric
on $M$ is of the form $\lambda = -\beta dt^2 + g_t$. We denote by $\Box$ the 
d'Alembertian w.r.t.\ $\lambda$. The Cauchy problem we are considering can then
be written as
\begin{eqnarray} \label{CPD}
\Box u &=& f \quad \text{ on } M \nonumber\\
u(0,\,.\,) &=& u_0 \\
\nabla_{\hat\xi} u(0,\,.\,) &=& u_1 \nonumber
\end{eqnarray}
In the present setting, $\hat\xi = \frac{1}{\sqrt{\beta}}\partial_t$.
To make this initial value problem 
meaningful in the distributional setting we suppose that $f$ is smooth in
the $t$-variable. More precisely, we assume that 
$f\in {\mathcal C}^\infty(\R,\E'(S))\cap \E'(\R\times S)$.
It then follows from non-characteristic regularity (\cite[Th.\ 8.3.1]{Hoermander:V1})
that any $u\in \D'(M)$ with $\Box u=f$ has $\pm(1,0)$ not in the wave front set $\WF(u)|_{(t,x)}$ for all $(t,x)$. 
 Thus by \cite[23.65.5]{D:93}, $u\in {\mathcal C}^\infty(\R,\D'(S))$, so the initial value problem
(\ref{CPD}) indeed makes sense for $u_0,\, u_1\in \E'(S)$. 

To our knowledge, (\ref{CPD}) has not been treated in full generality in the literature so 
far for $f$, $u_0$, $u_1$ as specified above. We therefore supply the necessary arguments.
\begin{lem} \label{u1}
(Uniqueness) There is at most one solution $u\in {\mathcal C}^\infty(\R,\D'(S))$
of $($\ref{CPD}$)$.
\end{lem}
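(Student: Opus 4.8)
The plan is to reduce to the homogeneous problem and then dualize against the smooth Green operators furnished by Theorem~\ref{Greenop}. By linearity it suffices to treat $f=0$, $u_0=u_1=0$ and to show that any $u\in\Cinf(\R,\D'(S))$ with $\Box u=0$, $u(0,\,\cdot\,)=0$ and $\nabla_{\hat\xi}u(0,\,\cdot\,)=0$ vanishes identically. Since $u$ is continuous in $t$ with values in $\D'(S)$, it is enough to prove $u\equiv 0$ on $\{t>0\}$ and on $\{t<0\}$ separately, the slice $t=0$ being already zero by hypothesis. By symmetry I concentrate on $\{t>0\}$, where it suffices to show $\langle u,\psi\rangle=0$ for every $\psi\in\D(M)$ with $\supp(\psi)\subseteq\{t>0\}$, since such $\psi$ determine the restriction of $u$ to that open set.

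Fix such a $\psi$. Using that $M=\R\times S$ is globally hyperbolic (Theorem~\ref{BS}) and that the scalar d'Alembertian is formally self-adjoint, $\Box^*=\Box$, Theorem~\ref{Greenop} provides $w:=G_-\psi\in\Gamma(M)$ with $\Box w=\psi$ and $\supp(w)\subseteq J^M_-(\supp(\psi))$. The decisive point is that, by global hyperbolicity, $J^M_-(\supp(\psi))\cap\{t\ge 0\}$ is compact, so the restriction of $w$ to the future region $\{t\ge 0\}$ is smooth and compactly supported, whereas $w$ need not vanish on the Cauchy slice $\{t=0\}$ itself.

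Next I apply the divergence identity (\ref{diveq}) with $P=P^*=\Box$ to the pair $(w,u)$ and integrate over $\Omega^+:=\{t\ge 0\}$. Formally, $\div\tilde W=w\,\Box u-(\Box w)\,u=-\psi\,u$ with $\langle\tilde W,X\rangle=(\nabla_X w)\,u-w\,(\nabla_X u)$, so the divergence theorem equates $\langle u,\psi\rangle$, up to a sign and the smooth weight on $S$, with the flux $\int_{\{t=0\}}\bigl((\nabla_{\hat\xi}w)\,u-w\,(\nabla_{\hat\xi}u)\bigr)\,dA$; this is the only boundary contribution because $w|_{\Omega^+}$ is compactly supported. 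Inserting the vanishing Cauchy data $u(0,\,\cdot\,)=0$ and $\nabla_{\hat\xi}u(0,\,\cdot\,)=0$ annihilates the flux, whence $\langle u,\psi\rangle=0$. Running the same argument with $w=G_+\psi$ over $\{t\le 0\}$ for $\psi$ supported in $\{t<0\}$ then completes the proof.

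The step that needs genuine care, and which I expect to be the main obstacle, is justifying this Green-type identity when $u$ is only a distribution in the spatial variable. The clean route avoids the geometric divergence theorem on distributions: write $\langle u,\psi\rangle=\int_\R\langle u(t),\psi(t,\,\cdot\,)\rangle_{\D'(S),\D(S)}\,dt$, split $\Box$ into its second-order spatial part (with smooth coefficients) and the $\partial_t$-part arising from $-\tfrac1\beta\partial_t^2$ together with the first-order weight terms, and then transfer derivatives. Because $w(t,\,\cdot\,)\in\D(S)$ depends smoothly on $t$, the spatial part may be moved onto $w$ by the very definition of distributional derivatives on $S$; because $u\in\Cinf(\R,\D'(S))$, the two integrations by parts in $t$ are legitimate and produce exactly the boundary pairings $\langle u(0,\,\cdot\,),\nabla_{\hat\xi}w(0,\,\cdot\,)\rangle$ and $\langle\nabla_{\hat\xi}u(0,\,\cdot\,),w(0,\,\cdot\,)\rangle$ at $t=0$, both of which vanish. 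Keeping track of the smooth weights $\beta$ and $\sqrt{\det g_t}$ in these boundary terms is routine. I emphasize that one cannot instead read (\ref{CPD}) as a second-order ODE in $t$ with values in $\D'(S)$ and invoke abstract ODE uniqueness, since the spatial part of $\Box$ is unbounded on $\D'(S)$; the finite-propagation content is genuinely needed and is precisely what the Green operators encode.
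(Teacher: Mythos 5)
Your proof is correct, but it takes a genuinely different route from the paper's. The paper argues microlocally: it observes that $\Box$ is strictly hyperbolic with respect to the level sets of $t$, invokes local uniqueness near the Cauchy surface (via Dieudonn\'e, 23.72.8) to get $w:=u-\tilde u=0$ on a neighborhood $V$ of $S$, and then uses H\"ormander's propagation-of-singularities theorem together with the fact that every inextendible null bicharacteristic meets $S$ (so passes through $V$, where $\WF(w)$ is empty) to conclude $\WF(w)=\emptyset$; hence $w\in\Cinf(M)$ and $w=0$ by the smooth uniqueness part of Th.~\ref{mainwave}. You instead dualize against $w=G_\mp\psi$ and integrate by parts over $\{\pm t\ge 0\}$, using global hyperbolicity to make $J^M_-(\supp\psi)\cap\{t\ge0\}$ compact (the same \cite[Cor.~A.5.4]{BGP:07} fact the paper uses elsewhere). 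This is in effect a direct proof, for an arbitrary solution, of the representation formula that the paper only establishes after uniqueness (the identity following (\ref{propagator2}), derived there by approximating the already-unique solution), so your route extracts uniqueness and, as a bonus, the support property from a single computation, bypassing wavefront calculus entirely. The price is exactly the bookkeeping you correctly identify as the crux: one must know that for $u\in\Cinf(\R,\D'(S))$ the distributional $\Box u$ is computed slicewise, so that the two $t$-integrations by parts and the transfer of the spatial (smooth-coefficient) part onto $w$ are legitimate, with all boundary terms --- including those produced by the first-order $t$-derivative terms --- involving only $u(0)$ and $\partial_t u(0)$, which vanish; this is standard and your sketch of it is sound. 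Conversely, the paper's microlocal proof buys structural information your argument does not provide: the difference of any two solutions is automatically smooth, and its singularities would propagate along null geodesics --- which meshes with the non-characteristic regularity argument the paper uses just before the lemma to place solutions in $\Cinf(\R,\D'(S))$ at all.
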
 
\begin{proof} We first note that $\Box$ is strictly hyperbolic with respect to the level sets of 
the map $T:\R\times S \to \R$, $(t,x)\mapsto t$ (in the sense of \cite[Def.\ 23.2.3]{Hoermander:V3}). 
In fact, for the principal symbol $\sigma$ of $\Box$ we have
$$
\sigma|_{(t,x)}(\tau,\xi) = \frac{1}{\beta(t,x)}\tau^2 - h_t^{-1}(x)(\xi,\xi)\,.
$$
Thus fixing $(t,x)\in M$ and $\xi\in T_x^*S$, $\xi\not=0$, the polynomial $p(z) :=
\sigma|_{(t,x)}(z,\xi)$ has the distinct real zeros $\pm\sqrt{\beta(t,x)h_t^{-1}(\xi,\xi)}$.

Suppose now that $u$ and $\tilde u$ are solutions of (\ref{CPD}) and set $w:=u-\tilde u$. 
By strict hyperbolicity it follows that there exists some neighborhood $V$ of $S$ such that
$w|_V = 0$ (see \cite[23.72.8]{D:93}). Furthermore, by \cite[Th.\ 23.2.9]{Hoermander:V3}, 
$\WF(w) \subseteq \Char(\Box)$ and is invariant under the Hamiltonian flow
of $\sigma$. Now $\Char(\Box)$ consists entirely of lightlike directions. Thus the projection
of any (maximal) bicharacteristic onto $M$ is an inextendible null-geodesic, hence intersects
the Cauchy surface $S$. Since the wavefront set of $w$ is empty in $V$ and is transported
along the bicharacteristics it therefore must be empty everywhere on $M$. Hence $w\in 
{\mathcal C}^\infty(M)$, and $w=0$ follows from the uniqueness part of Th.\ \ref{mainwave}.
\end{proof}
Turning now to the problem of existence, it clearly suffices to treat the following
special cases of (\ref{CPD}): on the one hand, the homogeneous problem ($f=0$), which
we denote by (CP1), and on the other hand the inhomogeneous 
problem with vanishing initial data, called (CP2). 

Turning first to (CP2), from Th.\ \ref{Greenop} it is straightforward to 
conclude that the Green operators $G_\pm:\D(M)\to {\mathcal C}^\infty(M)$ continuously
extend to operators from $\E'(M) \to \D'(M)$ as transposed operators of $G^*_\mp$ 
(see \cite[Th.\ 4.3.10]{W:09}). Thus given $f\in {\mathcal C}^\infty(\R,\E'(S))\cap \E'(\R\times S)$
we may set $w:=G_+ f$ to obtain $\Box w = f$. 

Our task is thereby reduced to proving solvability
of (CP1) since adding the solution of (CP1) with $u_0= -w(0,\,.\,)$ and 
$u_1= -\nabla_{\hat\xi} w(0,\,.\,)$
we obtain the desired solution of (CP2) (note that since $\supp(G_+f) \subseteq J^M_+(\supp f)$
by \cite[(4.3.20)]{W:09} both $u_0$ and $u_1$ are in $\E'(S)$ by \cite[Cor.\ A.5.4]{BGP:07}).

To obtain a solution of (CP1) we first observe the following consequence of (\ref{diveq}),
(cf.\ \cite[Th.\ 4.3.20]{W:09}): 
Denote by $G^*:=G^*_+ - G^*_-$ the {\em propagator} of the transposed operator $\Box^*$.
Then any smooth solution $u$ of the homogeneous equation $\Box u = 0$ satisfies
for all $\vphi\in \D(M)$
\begin{equation}\label{propagator}
\int_M \vphi \cdot u \, dV = \int_S (\nabla_{\hat\xi}G^*(\vphi))\cdot u_0
- G^*(\vphi)\cdot u_1 \,dA
\end{equation}  
where $u_0 = u|_S$, $u_1=\nabla_{\hat\xi}u|_S$, and $dA$ is the Riemannian surface
element of the spacelike surface $S$ (i.e., the Riemannian density w.r.t.\ $g_0$ in our case).  
For the distributional Cauchy problem (CP1), we take (\ref{propagator})
as a starting point and for given $u_0,\, u_1 \in \E'(S)$ define $L(u_0,u_1)\in \D'(M)$
by
$$
\langle L(u_0,u_1),\vphi \rangle := \langle u_0, (\nabla_{\hat\xi}G^*(\vphi))|_S \rangle
-\langle u_1,G^*(\vphi)|_S\rangle\,.
$$
In case $u_0$ and $u_1$ are the given Cauchy data $u:=L(u_0,u_1)$ will be the desired
solution to (CP1). We start establishing this fact by first deriving an explicit formula 
for $L(u_0,u_1)$ well-suited to the $(t,x)$-splitting. This naturally has to involve the 
Green operators. First observe that by Th.\ \ref{Greenop} (ii) we have $\Box L(u_0,u_1) = 0$, 
so the argument preceding Lemma \ref{u1} shows that in fact $L(u_0,u_1)\in {\mathcal C}^\infty(\R,\D'(S))$. 
Hence for $\vphi_0\in \D(\R)$, $\vphi_1\in \D(S)$ we may write
\begin{equation}\label{dings}
\lara{L(u_0,u_1),\vphi_0\otimes\vphi_1} = \int_\R \vphi_0(t) \lara{L(u_0,u_1)(t),
\vphi_1} \, dt.
\end{equation}

Denote by $F^*_\pm(s,x)$ the fundamental solutions of $\Box^*$ at $(s,x)$ according to 
Th.\ \ref{fundsol}. 
Setting $F^*_{s,x}:=F^*_-(s,x) - F^*_+(s,x)$, we have
$(G^*\vphi)(s,x) = \lara{F^*_{s,x},\vphi}$ for all $(s,x)\in M$ and all $\vphi\in \D(M)$.
Furthermore, 
$\Box F_{(s,x)} = \delta_{(s,x)} - \delta_{(s,x)} = 0$, so $F^*_{s,x}\in \Cinf(\R,\D'(S))$
for each $(s,x)\in M$. Thus for each $t\in \R$ and each $\psi\in \D(S)$ we obtain
\begin{equation}\label{Lt}
\lara{L(u_0,u_1)(t),\psi}
= \lara{u_0, \nabla_{\hat\xi}\lara{F^*_{s,x}(t), \psi} |_{s=0}} -
\lara{u_1, \lara{F^*_{0,x}(t), \psi }}\,.
\end{equation} 

We are now in the position to show that $u$ indeed attains the Cauchy data. To this end
choose sequences $u_0^{(j)}$, $u_1^{(j)}$ in $\D(S)$ that converge to $u_0$ resp.\
$u_1$ in $\E'(S)$. By Th.\ \ref{mainwave}, for each $j$ there exists a unique $u^{(j)}
\in \Cinf(M)$ such that
$$
\Box u^{(j)} = 0, \ u^{(j)}(0) = u_0^{(j)}, \ \nabla_{\hat\xi} u^{(j)}(0) = u_1^{(j)}.
$$
In addition, again by \cite[Th.\ 4.3.20]{W:09} we have $u^{(j)} = L(u_0^{(j)},u_1^{(j)})$.
Moreover, by \eqref{Lt} we obtain  for all $\psi\in \D(S)$ and all $t\in \R$
\begin{equation}\label{roconv}
\lara{u^{(j)}(t),\psi} = \lara{L(u_0^{(j)},u_1^{(j)})(t),\psi} \to
\lara{L(u_0,u_1)(t),\psi} = \lara{u(t),\psi}\,,
\end{equation}
i.e., for all $t$, $u(t) = \lim u^{(j)}(t)$ in $\D'(S)$. In particular, $u(0) = 
\lim u^{(j)}(0) = \lim u_0^{(j)} = u_0$, thereby verifying the first initial condition
for $u$. 

To show that $\nabla_{\hat\xi}u(0)=u_1$ we first observe that by \eqref{Lt} we have for all 
$\psi\in \D(S)$ and all $t\in \R$
\begin{eqnarray*}
 \lara{\partial_tu^{(j)}(t),\psi}&=&\lara{L(u_0^{(j)},u_1^{(j)})'(t),\psi}\\
  &\to&\lara{L(u_0,u_1)'(t),\psi}=\lara{\partial_tu(t),\psi}.
\end{eqnarray*}
Now since for all $t\in\R$ we have $\nabla_{\hat\xi}u(t)=1/\sqrt{\beta(t,.)}\,\partial_tu(t)$ we 
obtain
\begin{eqnarray*}
 \nabla_{\hat\xi}u(0)=\frac{1}{\sqrt{\beta(0,.)}}\,\partial_tu(0)&=&\frac{1}{\sqrt{\beta(0,.)}}\,\lim\partial_tu^{(j)}(0)\\
 &=&\lim\nabla_{\hat\xi}u^{(j)}(0)=\lim u_1^{(j)}=u_1,
\end{eqnarray*}
thereby also verifying the second initial condition.

Finally, we demonstrate that the support of the unique solution of \eqref{CPD} satisfies the same
inclusion relation as in the smooth case (Th.\ \ref{mainwave}). To see this, we first note
that for any $u\in \Cinf(M)$ such that $\supp(\Box u)$ is compact
we have the following generalization of (\ref{propagator}):
\begin{equation}\label{propagator2}
\int_M \vphi \cdot u \, dV = \int_M \Box u (G^*_+ + G^*_-)(\vphi)\,dV +
\int_S (\nabla_{\hat\xi}G^*(\vphi))\cdot u_0
- G^*(\vphi)\cdot u_1 \,dA
\end{equation}
(this follows by adapting the proof of \cite[Lemma 3.2.2]{BGP:07}). Now suppose that
$u$ is the unique solution of (\ref{CPD}) and pick a sequence $u_m\in \D(M)$ such
that $u_m(t) \to u(t)$ for each $t\in \R$. For any $\vphi\in \D(M)$ and each $t\in \R$,
$\supp((G^*_+ + G^*_-)\vphi(t,\,.\,))$ is compact, so
\begin{eqnarray*}
\langle \Box u_m, (G^*_+ + G^*_-)\vphi\rangle &=&
\int_\R\langle \Box u_m(t), (G^*_+ + G^*_-)\vphi(t,\,.\,)\rangle\,dt \\
&\to& \int_\R\langle \Box u(t), (G^*_+ + G^*_-)\vphi(t,\,.\,)\rangle\,dt
= \langle \Box u, (G^*_+ + G^*_-)\vphi\rangle
\end{eqnarray*}
Thus applying (\ref{propagator2}) to each $u_m$ and letting $m\to \infty$ we obtain
$$
\langle u,\vphi\rangle = \lara{f,(G^*_+ + G^*_-)\vphi} 
+ \langle u_0, (\nabla_{\hat\xi}G^*(\vphi))|_S \rangle
-\langle u_1,G^*(\vphi)|_S\rangle\,.
$$
From this and Th.\ \ref{Greenop} (iii), the claimed support properties of $u$ follow.
Summing up, we have proved:
\begin{thm}
Given $f\in {\mathcal C}^\infty(\R,\E'(S))\cap \E'(\R\times S)$ and $u_0$, $u_1\in \E'(S)$
there exists a unique solution $u\in \Cinf(\R,\D'(S))$ of the Cauchy problem (\ref{CPD}).
Moreover, $\supp(u) \subseteq J^M(\supp(f)\cup \supp(u_0) \cup \supp(u_1))$.
\end{thm}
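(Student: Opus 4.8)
The plan is to assemble the ingredients developed above, treating uniqueness, existence, and the support inclusion in turn. Uniqueness is immediate, since any solution lies in $\Cinf(\R,\D'(S))$ by the non-characteristic regularity argument preceding Lemma \ref{u1}, so that Lemma \ref{u1} applies directly. For existence I split \eqref{CPD} into the homogeneous problem (CP1) and the inhomogeneous problem with vanishing data (CP2). To handle the inhomogeneity I set $w:=G_+f$, using the continuous extension of $G_+$ to $\E'(M)\to\D'(M)$ as the transpose of $G^*_-$; then $\Box w=f$, and since the right-hand side is smooth in $t$, the regularity argument places $w$ in $\Cinf(\R,\D'(S))$, so that $w(0,\cdot)$ and $\nabla_{\hat\xi}w(0,\cdot)$ are well-defined elements of $\E'(S)$ (invoking $\supp(G_+f)\subseteq J^M_+(\supp f)$ and \cite[Cor.\ A.5.4]{BGP:07}). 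The full solution is then $u:=w+L(u_0-w(0,\cdot),\,u_1-\nabla_{\hat\xi}w(0,\cdot))$, so everything reduces to showing that, for arbitrary data $v_0,v_1\in\E'(S)$, the distribution $L(v_0,v_1)$ solves (CP1) with those data. That $\Box L(v_0,v_1)=0$ follows from Th.\ \ref{Greenop}(ii), and the regularity argument again yields $L(v_0,v_1)\in\Cinf(\R,\D'(S))$.

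The crux, where I expect the main work to lie, is verifying that $L(v_0,v_1)$ attains the prescribed Cauchy data. My approach is approximation: choose $v_0^{(j)},v_1^{(j)}\in\D(S)$ converging to $v_0,v_1$ in $\E'(S)$ and let $u^{(j)}\in\Cinf(M)$ be the smooth solutions supplied by Th.\ \ref{mainwave}. The identity $u^{(j)}=L(v_0^{(j)},v_1^{(j)})$ from \cite[Th.\ 4.3.20]{W:09}, combined with the explicit slicewise formula \eqref{Lt}, lets me pass to the limit: for each $t$ and each $\psi\in\D(S)$ one gets $\lara{u^{(j)}(t),\psi}\to\lara{L(v_0,v_1)(t),\psi}$, so $L(v_0,v_1)(t)=\lim u^{(j)}(t)$ in $\D'(S)$, and evaluation at $t=0$ gives the first initial condition. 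For the second I repeat the limiting argument with the $t$-derivative of \eqref{Lt} and then multiply by the smooth factor $1/\sqrt{\beta(0,\cdot)}$ to convert $\partial_t$ into $\nabla_{\hat\xi}$. The delicate point is the interchange of the limit with the $t$-derivative and with the restriction to $S$; this is legitimate precisely because \eqref{Lt} exhibits the $t$-dependence of $L$ through the smooth-in-$t$ family $F^*_{s,x}$.

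Finally, for the support inclusion I would establish the generalized propagator identity \eqref{propagator2} for the solution itself. Approximating $u$ slicewise by $u_m\in\D(M)$ with $u_m(t)\to u(t)$ and applying \eqref{propagator2} to each $u_m$, the compactness of $\supp((G^*_++G^*_-)\vphi(t,\cdot))$ for each fixed $t$ permits passage to the limit and yields
$$
\lara{u,\vphi}=\lara{f,(G^*_++G^*_-)\vphi}+\lara{u_0,(\nabla_{\hat\xi}G^*\vphi)|_S}-\lara{u_1,(G^*\vphi)|_S}.
$$
If $\supp\vphi$ is disjoint from $J^M(\supp f\cup\supp u_0\cup\supp u_1)$, then each term on the right vanishes by Th.\ \ref{Greenop}(iii), since $G^*_\pm\vphi$ is supported in $J^M_\pm(\supp\vphi)$; hence $\supp u\subseteq J^M(\supp f\cup\supp u_0\cup\supp u_1)$, as claimed.
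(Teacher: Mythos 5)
Your proposal is correct and follows essentially the same route as the paper: the same splitting into (CP1)/(CP2) with $w=G_+f$ via the transposed Green operator, the same approximation of the data by smooth sections combined with the slicewise formula \eqref{Lt} and $u^{(j)}=L(u_0^{(j)},u_1^{(j)})$ to verify both initial conditions (including the $1/\sqrt{\beta(0,\cdot)}$ conversion of $\partial_t$ into $\nabla_{\hat\xi}$), and the same use of \eqref{propagator2} with a slicewise approximation $u_m(t)\to u(t)$ to derive the representation of $\lara{u,\vphi}$ and read off the support inclusion from Th.\ \ref{Greenop}(iii). No gaps to report.
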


\section{Generalized global analysis}\label{gga}

Colombeau algebras of generalized functions (\cite{Colombeau:84,Colombeau:85}) 
are differential algebras which contain the vector space of distributions and display 
maximal consistency with classical analysis in the light of the Schwartz impossibility 
result (\cite{Schwartz:54}). Here we review global analysis based on the special
Colombeau algebra $\G(M)$, for further details see \cite{DD:91,KS:02,KS:02b} and
\cite[Sec.\ 3.2]{GKOS:01}. 
The basic idea of its construction is regularization of distributions by nets of smooth 
functions and the use of asymptotic estimates in terms of a regularization parameter.

Let $M$ be a smooth, second countable Hausdorff manifold.
Set $I=(0,1]$ and denote by $\E(M)$ the subset of $\Cinf(M)^I$ consisting of all nets
depending smoothly on the parameter $\eps\in I$. The algebra 
of generalized functions on $M$ (\cite{DD:91}) is defined as the quotient $\G(M) := \EM(M)/\NN(M)$ of 
moderate modulo negligible elements of $\E(M)$, where the respective notions are
defined by the following asymptotic estimates. Here ${\mathcal P}$ denotes the space of all
linear differential operators on $M$.
\[
\EM(M) :=\{ (u_\eps)_\eps\in\E(M):\, \forall K\comp M\
\forall P\in{\mathcal P}\ \exists N:\ \sup\limits_{p\in K}|Pu_\eps(p)|=O(\eps^{-N}) \}\\
\]
\[\NN(M)  :=\{ (u_\eps)_\eps\in\E(M):\ \forall K\comp M\
\forall P\in{\mathcal P}\ \forall m:\ \sup\limits_{p\in K}|Pu_\eps(p)|=O(\eps^{m}) \}.
\]
Elements of $\G(M)$ are denoted by $u = [(u_\eps)_\eps] =
(u_\eps)_\eps + \NN(M)$. With componentwise operations, $\G(M)$ is a
fine sheaf of differential algebras where the derivations are Lie
derivatives with respect to smooth vector fields defined by
$L_Xu :=[(L_X u_\eps)_\eps]$, also denoted by $X(u)$. 


There exist embeddings $\iota$ of $\D'(M)$ into $\G(M)$ that are sheaf homomorphisms
and render $\Cinf(M)$ a subalgebra of $\G(M)$. Another, more coarse way
of relating generalized functions in $\G(M)$ to distributions is based on 
the notion of association:
$u\in \G(M)$ is called associated with $v\in \G(M)$, $u\approx v$, if $u_\eps - v_\eps \to 0$
in $\D'(M)$. A distribution $w\in \D'(M)$ is called the distributional
shadow of $u$ if $u\approx \iota(w)$.

The ring of constants in $\G(M)$ is the space $\tilde \C$ of generalized
numbers, which form the natural space of point values
of Colombeau generalized functions. These, in turn, are uniquely
characterized by their values on so-called (compactly supported) 
generalized points (equivalence classes of bounded nets $(p_\eps)_\eps$
of points, where $(p_\eps)_\eps \sim (q_\eps)_\eps$ if $d(p_\eps,q_\eps)
=O(\eps^m)$ for each $m$).

An element $u \in \G(M)$ is called globally bounded, if there exists a representative $(u_\eps)_\eps$ and a $C >0$ such that $|u_\eps(x)| \leq C$ for all $x \in M$ and all $\eps \in I$.

A similar construction is in fact possible for any locally convex space
$F$: an analogous quotient construction in terms of the seminorms
on $F$ allows to assign a $\tilde\C$-module $\G_F$ to $F$ in a natural way
(\cite{G:05}). A particularly important special case is obtained
for $F=\Gamma(M,E)$, the space of smooth sections of a vector bundle $E\to M$
(again with representatives that are supposed to depend smoothly on $\eps$). 
The resulting space $\GaG(M,E):=\G_F$ then is a
$\G(M)$-module, called the space of generalized sections  of the vector bundle
$E$. A convenient algebraic description is as follows:  
\begin{equation}
\label{tensorp} \GaG(M,E) = \G(M) \otimes_{\Cinf(M)} \Ga(M,E)= L_{\Cinf(M)}(\Ga(M,E^*),\G(M)).
\end{equation}
$\GaG$ is a fine sheaf of finitely generated and projective $\G$-modules. 
For the
special case of generalized tensor fields of rank $r,s$ we use the notation $\G^r_s(M)$. We have:
\[
 \G^r_s(M)\cong L_{\G(M)}(\G^1_0(M)^s,\G^0_1(M)^r;\G(M)).
\]
Observe that this allows the insertion of generalized vector fields and one-forms into
generalized tensors, a point of view which is essential when dealing with generalized
metrics in the following sense.

\begin{defn}
An element $g$ of $\G^0_2(M)$ is called a generalized pseudo-Riemannian metric
if it is symmetric ($g(\xi,\eta) = g(\eta,\xi)$ $\forall \xi,\, \eta\in \mathfrak{X}(M)$),
its determinant $\det g$ is invertible in $\G$ (equivalently, for each compact 
subset $K$ of a chart in $M$ there exists some $m$ such that $|\det (g_\eps)_{ij}(p)| > \eps^m$
for $\eps$ small and all $p\in K$), and it possesses a well-defined 
index $\nu$ (the index of $g_\eps$ equals $\nu$ for $\eps$ small).
\end{defn}
Based on this definition, many notions from (pseudo-)Riemannian geometry can be
extended to the generalized setting (cf.\ \cite{KS:02b}). In particular, any 
generalized metric induces an isomorphism between generalized vector fields and
one-forms, and there is a unique Levi-Civita connection corresponding to $g$.
This provides a convenient framework for non-smooth pseudo-Riemannian geometry and
the analysis of highly singular space-times in general relativity (see e.g.\ \cite[Ch.\ 5]{GKOS:01},
\cite{SV:06}).

For the purposes of the present work we will also need the notion of a time-dependent
generalized metric. To this end, let $S$ be an $n$-dimensional smooth manifold and let
$\text{pr}_2 \col \R\times S \to S$ denote the projection onto $S$. 
Also, let $\text{pr}_2^*(T^0_2 S)$ be
the corresponding pullback-bundle. 
\begin{defn}\label{tdepgmetric}
  An element $ {h} \in \Gamma_\G(\text{pr}_2^*(T^0_2 S))$ is called $t$-dependent generalized 
  pseudo-Riemannian metric if $h_t$ possesses a well-defined index and if
\begin{itemize}
\item[(i)] (Symmetry) $ {h}_t(\xi,\eta) =  {h}_t(\eta,\xi)$ in $\G(\R\times S)$ for all $\xi$, $\eta\in \mathfrak{X}(M)$. 
\item[(ii)] (Non-degeneracy) $\det ( {h}_t)$ is {\em strictly nonzero} in the following sense: 
for any $K\subset\subset \R$, and any $L$ compact in some chart neighborhood on $S$ there exists 
some $m$ such that $|\det (( {h}_\eps)_t)_{ij}(x)|>\eps^m$ for $(t,x)\in K\times L$ and $\eps$ small.
\end{itemize}
\end{defn}
We conclude this section by the following globalization lemma.
\begin{lem}\label{globallem} Let $u: I\times M \to N$ be a smooth map
and let (P) be a property attributable to values $u(\eps,p)$ 
that is stable with respect to decreasing $K$ and $\eps$ in the following sense:
if $u(\eps,p)$ satisfies (P) for all  $p\in K\subset\subset M$ and
all $\eps$ less than some $\eps_K>0$ then for any compact set $K'\subseteq K$
and any $\eps_{K'} \le \eps_K$, $u$ satisfies (P) on $K'$ for all
$\eps\le \eps_{K'}$. Then there exists a smooth map $\tilde u: I\times M \to N$
such that (P) holds for all $\tilde u(\eps,p)$ ($\eps\in I$, $p\in M$) and
for each $K\subset\subset M$ there exists some $\eps_K\in I$ such that
$\tilde u(\eps,p) = u(\eps,p)$ for all $(\eps,p)\in (0,\eps_K] \times K$.
\end{lem}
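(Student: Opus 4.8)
The plan is to leave the values of $u$ essentially untouched for small parameters but to \emph{reparametrise the $\eps$-variable in a point-dependent way}, so that the rescaled parameter never leaves the range in which (P) is known to hold. Concretely, I would construct a smooth, strictly positive ``cushion'' function $\rho\col M\to I$ together with a smooth clamping map $\psi\col I\times M\to I$ satisfying $\psi(\eps,p)\le\rho(p)$ for all $(\eps,p)$ and $\psi(\eps,p)=\eps$ whenever $\eps$ is small relative to $\rho(p)$, and then set $\tilde u(\eps,p):=u(\psi(\eps,p),p)$. Since the target $N$ is only a manifold, it is essential that this reparametrises the \emph{argument} of $u$ rather than averaging values; no structure on $N$ is needed. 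Throughout I use the (tacit) hypothesis that for each $K\comp M$ the property (P) holds for $u(\eps,p)$, $p\in K$, once $\eps$ is small enough.

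First I would build $\rho$. Choose a compact exhaustion $K_1\subseteq K_2\subseteq\cdots$ of $M$ with $K_n\subseteq\operatorname{int}K_{n+1}$ and $\bigcup_nK_n=M$, and for each $n$ pick $\delta_n>0$ with (P) holding on $K_n$ for $\eps\le\delta_n$. Setting $\eps_n:=\min(\delta_1,\dots,\delta_n)$ and invoking the stability hypothesis (with $K'=K_n\subseteq K_n$ and the smaller threshold $\eps_n\le\delta_n$) yields a \emph{decreasing} sequence $\eps_n$ for which (P) still holds on $K_n$ whenever $\eps\le\eps_n$. Now take a smooth partition of unity $(\chi_n)_n$ subordinate to the locally finite open cover $U_n:=\operatorname{int}K_{n+1}\setminus K_{n-1}$ (with $K_0=\emptyset$) and put
\begin{equation*}
\rho(p):=\sum_n\chi_n(p)\,\eps_{n+1}.
\end{equation*}
On $\operatorname{supp}\chi_n\subseteq U_n$ one has $p\notin K_{n-1}$ and $p\in K_{n+1}$, so the least index $m$ with $p\in K_m$ satisfies $n\le m\le n+1$, whence $\eps_{n+1}\le\eps_m$; consequently $\rho(p)\le\eps_m$, and therefore (P) holds for $u(\eps,p)$ for \emph{every} $p\in M$ as soon as $0<\eps\le\rho(p)$. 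Moreover $\rho$ is smooth and takes values in $I$.

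Next I would produce the clamp. Fix a smooth increasing map $\chi\col(0,\infty)\to(0,1]$ with $\chi(s)=s$ for $s\le\tfrac12$ and $\chi(s)\le 1$ throughout, and set $\psi(\eps,p):=\rho(p)\,\chi\!\bigl(\eps/\rho(p)\bigr)$. Then $\psi$ is smooth, $0<\psi(\eps,p)\le\rho(p)\le 1$ for all $(\eps,p)$, and $\psi(\eps,p)=\eps$ whenever $\eps\le\tfrac12\rho(p)$. Defining $\tilde u(\eps,p):=u(\psi(\eps,p),p)$, smoothness of $\tilde u$ follows from that of $u$ and $\psi$; property (P) holds for all $\tilde u(\eps,p)$ because $\psi(\eps,p)\le\rho(p)$ and (P) is valid for $u(\,\cdot\,,p)$ up to the threshold $\rho(p)$; and for any $K\comp M$ the choice $\eps_K:=\tfrac12\min_{p\in K}\rho(p)>0$ forces $\psi(\eps,p)=\eps$, hence $\tilde u(\eps,p)=u(\eps,p)$, for all $(\eps,p)\in(0,\eps_K]\times K$.

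The only genuinely delicate point is the construction of $\rho$: one must turn the \emph{sequence} of local thresholds $\eps_n$ attached to the exhaustion into a single globally defined smooth positive function whose sublevel behaviour respects the local validity of (P) at every individual point. This is exactly where the stability of (P) under shrinking of $K$ and $\eps$ enters — both to arrange the thresholds to be monotone along the exhaustion and to ensure that the convex-combination value $\rho(p)$ still lies below an admissible threshold at each $p$. Once $\rho$ is in hand, the clamping construction and the verification of the three asserted properties are routine.
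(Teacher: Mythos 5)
Your proof is correct and follows essentially the same route as the paper's: both set $\tilde u(\eps,p)=u(\mu(\eps,p),p)$ with $\mu(\eps,p)=\eta(p)\,\nu\bigl(\eps/\eta(p)\bigr)$, where $\eta$ is a smooth positive threshold function adapted to a compact exhaustion and $\nu$ is a smooth clamp equal to the identity near $0$ and bounded by $1$. The only difference is that where the paper invokes \cite[Lem.\ 2.7.3]{GKOS:01} for the existence of $\eta$, you construct it explicitly (your $\rho$) via a partition of unity subordinate to the exhaustion shells, correctly monotonizing the local thresholds along the way.
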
 
\begin{proof}
Let $(K_l)_l$ be a compact exhaustion of $M$ with $K_l\subseteq K_{l+1}^\circ$ for all $l$
and choose a smooth function
$\eta: M \to \R$ with $0<\eta(x)\le \eps_{K_l}$ for all $x\in K_l\setminus K_{l-1}^\circ$ ($K_0:=\emptyset$) 
(cf.\ e.g.\ \cite[Lem.\ 2.7.3]{GKOS:01}). Moreover, let $\nu: \R^+_0 \to [0,1]$ be a smooth function satisfying 
$\nu(t)\le t$ for all $t$ and
$$
\nu(t) = \left\{
\begin{array}{ll}
t & 0 \le t \le \frac{1}{2} \\
1 & t \ge \frac{3}{2}                     
\end{array}
\right.
$$
For
$(\eps,x)\in I\times M$ let $\mu(\eps,x) := \eta(x) \nu\!\left(\frac{\eps}{\eta(x)}\right)$
and set $\tilde u(\eps,x) := u(\mu(\eps,x),x)$. Then $\tilde u$ has all the required 
properties. 
\end{proof}

\begin{rem} \label{globalrem} 
Lemma \ref{globallem} allows to globalize properties of Colombeau-type
generalized functions provided that representatives depend smoothly on $\eps$, as is assumed
throughout this work. Indeed, if in the above situation it is additionally 
assumed that $u$ is a representative of a Colombeau generalized function
then by the very nature of the defining asymptotic estimates,  $\tilde u$
is itself moderate and in fact constitutes a representative of the same
generalized function possessing the required property (P) globally on $M$. 
In particular, any (time-dependent) generalized metric $g$ possesses a
representative $(g_\eps)_\eps$ such that each $g_\eps$ is a smooth
(time-dependent) metric globally on $M$, a fact that will repeatedly be used in what is
to follow. 
\end{rem}

\section{Wave equations of non-smooth metrics I: The local theory}\label{local}

In this section we present a local existence and uniqueness result (closely related to the one in 
\cite{GMS:09}) for the homogeneous wave equation of a class of generalized Lorentz metrics $g$,
which will be extended to a global result in Section \ref{global}.
We start by introducing this class of generalized weakly singular metrics.

\subsection{Weakly singular Lorentzian metrics}

Let $g$ be a generalized Lorentzian metric on $M$. From now on we call the pair
$(M,g)$  a generalized space-time. To formulate
asymptotic conditions on representatives $({g_\eps})_\eps$ of $g$ 
let $ {m}$ be a background Riemannian metric on $M$ and denote by $\|\;\|_{{m}}$ the
norm induced on the fibers of the respective tensor bundle. 
To begin with we impose the following condition:

\begin{enumerate}
\label{settingmetric}
\item[(A)]
\label{new-setting1} For all compact sets $K$, for all orders of
derivative $k \in \mathbb N_0$ and all $k$-tuples of smooth vector fields
${\eta}_1, \dots, {\eta}_k$
and for any representative $( g_{\eps} )_\eps$ we have:
\begin{eqnarray}\label{conditionA}
\sup_K \left\Vert \mathscr{L}_{{\eta}_1} \dots
 \mathscr{L}_{{\eta}_k} g_{\eps}
 \right\Vert_{{m}} &=& O(\eps^{-k})\nonumber \quad (\eps\rightarrow 0)\\
\sup_K \left\Vert \mathscr{L}_{{\eta}_1}\dots\mathscr{L}_{{\eta}_k}
 g_{\eps}^{-1} \right\Vert_{{m}} &=&O(\eps^{-k})\quad(\eps\rightarrow 0).
\end{eqnarray}
\end{enumerate}

A generalized metric with property (A) will be called 
a \emph{weakly singular metric}. Note that here we use a somewhat different terminology
as compared to \cite{GMS:09}. 

Now with a view to formulating the local Cauchy problem of the wave operator
for such a metric 
we consider a local foliation of $M$ given by the level sets of some non-singular 
function $t\in\Cinf(U)$, where $U\subseteq M$ is open and relatively compact. 

To exclude trivial cases we require the level sets $\Sigma_{\tau}= \{ q \in U : t(q) = \tau \}$ 
to be space-like with respect to all $g_{\eps}$. In fact, we suppose a uniform variant of this condition 
which can also be viewed as a suitable generalization of the classical notion of time-orientability.
Moreover, we will need a uniform bound on the covariant derivative of the normal form which, in particular,
contains a condition on the second fundamental form of the level sets. More precisely, we demand:

\begin{enumerate}
\item[(B)]
\label{new-setting2} Each $p\in M$ possesses a neighborhood $U$ on which there exists a
 \emph{local time-function}, that is a smooth function $t$ with uniformly timelike differential $dt=:\bsig$, i.e.,
 \begin{equation}\label{b1}
  g_\eps^{-1}(\bsig,\bsig)\leq -C<0\quad\text{for some positive constant $C$}
 \end{equation}
 and one (hence any) representative $g_\eps$ and all small $\eps$. 
 In addition, we have for all $K\comp U$ that 
 \begin{equation}\label{b2}
  \sup_K \left\Vert \nabla^{\eps} {\sigma}
  \right\Vert_{{m}} = O(1)\quad(\eps\to 0),
 \end{equation}
where $\nabla$ is the covariant derivative of $g$.
\end{enumerate}

Let us denote the normal vector field to $\Sigma_\tau$ by
$\bxi$ (it is a \emph{generalized} vector field defined via its representative $\bxieps \in \mathfrak{X}(U)$, 
given by $\bsig = g_{\eps}(\bxieps, \cdot)$, i.e., $\bxieps=\grad_\eps t$). We 
observe that by pulling up the index, given (A), condition (B) is equivalent to 
$\sup_K \left\Vert \nabla^{\eps} {\xi}_\eps\right\Vert_{{m}} = O(1)$. Also for
vector fields $X, Y$ tangent to $\Sigma_\tau$, we obtain $\nabla^\eps \sigma (X,Y) =
Y(\sigma(X)) - \sigma(\nabla^\eps_Y X) = 0 - \sigma(\text{nor} \nabla^\eps_Y X)$, hence 
we obtain for the second fundamental form $II_\eps$ of the
hypersurfaces $\Sigma_\tau$
$$ 
   \left\Vert II_\eps  \right\Vert_{{m}} = O(1) 
    \qquad (\eps \to 0) \quad\text{uniformly on compact sets}.
$$

\begin{rem}
\label{ConditionsRemark}
Conditions~(A) and~(B) are given in terms of the $\eps$-asymptotics of
the generalized metric. There is, however, the following close
connection to the classical situation. Assume that we are given a space-time
metric that is locally bounded but not necessarily ${\mathcal C}^{1, 1}$ or of
Geroch-Traschen class (i.e., the largest class that allows
a consistent distributional treatment, see \cite{GT:87,LeFM:07} and
\cite{SV:09} for the relation with the present setting). We may then embed this
metric into the space of generalized metrics essentially by convolution with a
standard mollifier (for details again see \cite{SV:09}). From the explicit form of the embedding it is then
clear that condition (A) holds.

Condition~(B), in adapted coordinates, demands somewhat better asymptotics of
the time-derivatives of the spatial part of the metric as well as the spatial
derivative of the $(0,0)$-component.
This condition, in fact, is satisfied by several relevant examples as well.
In particular, we have:

\begin{itemize}
 \item Conical space-times fall into our class since estimates~(6) and~(7) in~\cite{VW:00} for the embedded 
  metric imply our condition~(A), while~(B) is immediate from the staticity of the metric.
 \item Impulsive pp-waves (in \lq\lq Rosen form\rq\rq) 
 \[
  ds^2= - du dv + \left( 1 + u_+ \right)^2 dx^2 + \left( 1 - u_+ \right)^2 dy^2
 \]
 as well as expanding impulsive waves with line element
 \[
  ds^2=2 du dv + 2 v^2 \left| dz + \frac{u_+}{2v} \overline{H} d\overline{z} \right|^2
 \]
 satisfy conditions (A) and (B). Here  $u_+ $ denotes the kink function and
 $H(z)$ is the Schwarzian derivative $H=h'''/(2h')-(3h''^2)/(4h'^2)$ of some analytic 
function $h(z)$ (which may be chosen
 arbitrarily). For details see \cite[Ch.\ 20]{GP:09}.  In both cases the metric is continuous and
 it will obey conditions~(A) and (B) when embedded with a standard mollifier, or -- more generally -- if we 
 use any regularization that converges locally uniformly to the
 original metric.
\end{itemize}
\end{rem}

\subsection{Local existence and uniqueness}

We start by formulating the local Cauchy problem for the wave operator on weakly singular
space-times. Let $p \in M$, and choose $U$ to be an open and relatively compact 
neighborhood of $p$ as in condition (B).
Denote the corresponding foliation by $\Sigma_{\tau}= \{ q \in U : t(q) = \tau \}$ ($\tau\in[-\ga,\ga]$)
and suppose $p\in\Sigma_0=:\Sigma$. 
In addition to the normal vector field $\bxi$ and the normal covector field
$\bsig$ whose asymptotics have already been discussed above we will need
their corresponding normalized versions $\bxihat = [(\bxiepshat)_\eps] =
[(\bxieps / V_{\eps})_{\eps}]$ and $\bsighat =
[(\bsigepshat)_{\eps}] = g( \bxihat, \cdot)$, where we have set 
$ V_{\eps}^2 = - g_{\eps}(\bxieps, \bxieps)$.

We are interested in the initial value problem 
\begin{eqnarray}
\Box u &=& 0 \quad\text{on $U$}\nonumber\\\label{weqofsetting} 
u &=& u_0\quad\text{on $\Sigma$}\\
\nonumber \nabla_{\widehat{{\xi}}} u &=& u_1\quad\text{on $\Sigma$},
\end{eqnarray}
where the initial data $u_0$, $u_1$ are 
supposed to be in $\mathcal{G}(\Sigma)$. Note that this, in particular,
includes the case of distributional initial data. We are
interested in finding a local solution $u \in \mathcal{G}$ on $U$
or an open subset thereof.

A general strategy to solve PDEs in generalized functions is the following.
First, solve the equation for fixed $\eps$ in the smooth setting and
form the net $(u_\eps)_\eps$ of smooth solutions. This will be a
candidate for a solution in $\mathcal{G}$, but particular care has
to be taken to guarantee that the $u_\eps$ share a common domain of
definition and depend smoothly on $\eps$. In fact, as has recently been shown in \cite{BK:11} it suffices to verify continuous dependence on $\eps$. 	 In the second step, one shows that the solution
candidate $(u_\eps)_\eps$ is a moderate net, hence obtaining
existence of a solution $[(u_\eps)_\eps]$ in $\mathcal{G}$.
Finally, to obtain uniqueness of solutions, one has to prove that
changing representatives of the data leads to a
solution that is still in the class $[(u_\eps)_\eps]$. Note that
this amounts to an additional stability of the equation with respect
to negligible perturbations of the initial data.

So, in the present situation we need a condition which provides us with the existence
of a solution candidate:

\begin{enumerate}
 \item [(C)] For each $p\in \Sigma$ there exists a neighborhood 
$V\subseteq U$ and a representative $(g_{\eps})_\eps$ of the metric $g$ on $V$ such 
that $V$ is, for each $\eps$, an RCCSV-neighborhood in $(M,g_{\eps})$ with $\Sigma\cap V$ a 
spacelike Cauchy hypersurface for $V$.
\end{enumerate}

Indeed Prop.\ \ref{RCCSV} now provides us with a solution candidate defined on $V$, that
is a net $(u_\eps)$ with $\Box_\eps u_\eps=f_\eps$ on $V$ for some negligible net $(f_\eps)$ 
and moreover $u_\eps|_{\Sigma\cap V}=u_{0\eps}$, $\nabla_{\hat\xi} u|_{\Sigma\cap V}=u_{1\eps}$ for 
some representatives $(u_{0\eps})$, $(u_{1\eps})$ of the data.
We note that continuous dependence of $u$ on $\eps$ follows readily from the construction steps detailed in Section \ref{SCP}.
Observe that the only part that exceeds the classical condition for existence and uniqueness of solutions 
is a certain uniformity in $\eps$. Heuristically this means that the light-cones of the metric $g_{\eps}$ 
neither vary to wildly with $\eps$ nor collapse as $\eps \rightarrow 0$. In terms of regularizations of 
classical metrics which are locally bounded but not necessarily of ${\mathcal C}^{1,1}$ or of 
Geroch-Traschen class, this condition will always be satisfied due to non-degeneracy of the classical metric. 

Now we may state the main result of this section.

\begin{thm}[Local existence and uniqueness of generalized solutions]
\label{mainthm} Let $(M, g)$ be a generalized space-time  
with a weakly singular metric and assume that conditions (B) and (C) 
hold
. Then, for each $p \in \Sigma$ there exists an open neighborhood $\Om$ such 
that for all compactly supported $u_0,u_1\in\G(\Sigma\cap\Om)$, the initial value problem~\eqref{weqofsetting} 
has a unique solution $u$ in $\mathcal{G}(\Om)$.
\end{thm}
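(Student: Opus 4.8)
The plan is to follow the three-step strategy outlined just before the theorem statement: produce a solution candidate, verify moderateness, and establish uniqueness against negligible perturbations. Since conditions (B) and (C) are assumed, the scaffolding for the first step is already in place. By Remark~\ref{globalrem} I may fix a representative $(g_\eps)_\eps$ for which each $g_\eps$ is a genuine smooth Lorentzian metric on $U$. Condition~(C) supplies, for the given $p\in\Sigma$, a neighborhood $V\subseteq U$ that is an RCCSV-domain for every $g_\eps$, with $\Sigma\cap V$ spacelike Cauchy. I would take $\Om:=V$ (or a slightly shrunk relatively compact subneighborhood). For compactly supported data $u_0,u_1\in\G(\Sigma\cap\Om)$ with representatives $(u_{0\eps})_\eps$, $(u_{1\eps})_\eps$ in $\D(\Sigma\cap\Om)$, Proposition~\ref{RCCSV} applied for each fixed $\eps$ yields a unique smooth $u_\eps\in\Cinf(\Om)$ solving $\Box_\eps u_\eps=0$ with $u_\eps|_{\Sigma\cap\Om}=u_{0\eps}$ and $\nabla_{\widehat\xi}u_\eps|_{\Sigma\cap\Om}=u_{1\eps}$. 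Smooth (or at least continuous) dependence on $\eps$ follows because the entire construction in Section~\ref{SCP}---Riesz distributions, Hadamard coefficients, the kernels $K_\pm$, and the Neumann series for $(\mathrm{id}+\mathcal K_\pm)^{-1}$---is built from the metric and its derivatives by operations that preserve smooth dependence on parameters; by \cite{BK:11} continuity in $\eps$ is in fact enough to obtain a well-defined net.

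The second step is to show the candidate net $(u_\eps)_\eps$ is moderate, so that $u:=[(u_\eps)_\eps]\in\G(\Om)$ is meaningful. This is where the energy estimates and conditions~(A),~(B) do the real work. I would derive an $\eps$-uniform energy inequality on $\Om$ for $\Box_\eps$: multiplying $\Box_\eps u_\eps=0$ by a suitable multiple of $\partial_t u_\eps$ and integrating over the region between $\Sigma$ and a level set $\Sigma_\tau$, the divergence theorem produces an energy functional $E_\eps(\tau)$ controlling the $H^1$-norm of $u_\eps$ on $\Sigma_\tau$. The error terms arising from differentiating the coefficients of the metric are controlled by condition~(B) (the $O(1)$ bound on $\nabla^\eps\sigma$, equivalently on the second fundamental form) together with the uniform timelike bound~\eqref{b1}, yielding a Gr\"onwall estimate $E_\eps(\tau)\le E_\eps(0)\exp(C|\tau|)$ with $C$ independent of $\eps$. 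Higher-order moderateness is obtained by commuting spatial and temporal derivatives through $\Box_\eps$ and applying the same energy method to $\partial^\alpha u_\eps$; each commutator introduces derivatives of the metric, which by condition~(A) cost at most a factor $O(\eps^{-k})$ per order. Since the data are moderate, iterating gives $\sup_K|P u_\eps|=O(\eps^{-N})$ for every differential operator $P$, which is precisely moderateness.

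For uniqueness, I would suppose $(\tilde u_{0\eps})_\eps$, $(\tilde u_{1\eps})_\eps$ are alternative representatives of the same data, differing from $(u_{0\eps})_\eps$, $(u_{1\eps})_\eps$ by negligible nets, and let $(\tilde u_\eps)_\eps$ be the corresponding solution net. The difference $w_\eps:=u_\eps-\tilde u_\eps$ solves $\Box_\eps w_\eps=0$ with negligible Cauchy data on $\Sigma\cap\Om$. The same $\eps$-uniform energy estimate, now applied to $w_\eps$, bounds $E_\eps(\tau)$ by a moderate multiple of $E_\eps(0)$; since $E_\eps(0)$ is $O(\eps^m)$ for all $m$, so is $w_\eps$ and all its derivatives, i.e.\ $(w_\eps)_\eps\in\NN(\Om)$. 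Hence the generalized solution is independent of the choice of representatives and is unique in $\G(\Om)$. The main obstacle throughout is the second step: establishing the energy inequality with constants that are genuinely uniform in $\eps$, which forces careful bookkeeping of how many factors of $\eps^{-1}$ the metric derivatives contribute at each order and why conditions~(A) and~(B) suffice to keep these under control; the first and third steps are then comparatively routine consequences of that estimate.
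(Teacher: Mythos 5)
Your overall route is the paper's: a solution candidate from condition (C) and Prop.~\ref{RCCSV} with continuous $\eps$-dependence justified via the construction of Section~\ref{SCP} and \cite{BK:11}; higher-order energy estimates in which the top-order Gronwall constant is $\eps$-uniform while commutator terms involving metric derivatives absorb the $O(\eps^{-k})$ losses permitted by condition (A); and negligibility of differences for uniqueness. (Your multiplier $\partial_t u_\eps$ is just the contraction $T^{ab,k}_\eps\xi_a\xi_b$ of the paper's energy-momentum tensors, so that difference is cosmetic.) However, one step as written would fail: you cannot take $\Om:=V$, nor an arbitrary relatively compact shrinking of it. When you integrate the divergence identity over the region between $\Sigma$ and $\Sigma_\tau$, Stokes' theorem produces, besides the fluxes through $S_0$ and $S_\tau$, a flux through the lateral boundary $S_{\Omega,\tau}$, and this term has a favorable sign (via the dominant energy condition satisfied by the $T^{ab,k}_\eps$) only if that boundary is spacelike \emph{for every} $g_\eps$. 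The paper therefore takes $\Om$ to be a lens-shaped region with $\overline\Om\subseteq V$ whose forward and backward boundary pieces are spacelike w.r.t.\ all $g_\eps$; the existence of such an $\Om$ is itself a small argument resting on condition (A)'s uniform control of the light cones. Without this geometric setup, the basic estimate \eqref{energyhierarchystokes}, and hence your Gronwall inequality, is not available.

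A second, more minor gap concerns uniqueness: you only treat nets with $\Box_\eps w_\eps=0$ exactly and negligible data, i.e.\ independence of the construction from the choice of data representatives. Uniqueness in $\G(\Om)$ requires more: a competing solution $\tilde u\in\G(\Om)$ of \eqref{weqofsetting} has representatives satisfying $\Box_\eps\tilde u_\eps=\tilde f_\eps$ with $(\tilde f_\eps)$ merely negligible, so the difference solves an \emph{inhomogeneous} equation with negligible right-hand side. This is precisely why Prop.~\ref{energyinequality} carries the term $C_k'\bigl(\SobODt{f_\eps}{k-1}\bigr)^2$ and why Cor.~\ref{energyestimate} concludes negligibility of the energies when both the initial energies and $(f_\eps)$ are negligible; adding this term to your estimate repairs the step. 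Two smaller omissions you should make explicit: passing from moderate energies to $\sup_K|Pu_\eps|=O(\eps^{-N})$ needs an $\eps$-uniform Sobolev embedding (available since the volume and the metrics $e_\eps$ are $O(1)$ by condition (A), together with Lemma~\ref{lemma1}); and moderateness of the initial energies $E^k_{0,\eps}(u_\eps)$ does not follow directly from moderateness of $(u_{0\eps})$, $(u_{1\eps})$, since the energies involve transversal derivatives of all orders on $\Sigma$, which must be expressed inductively through the wave equation (again costing only moderate factors by (A)).
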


The core of the proof of Th.\ \ref{mainthm} consists of higher order energy
estimates for the solution candidate whose existence is secured by condition (C).
The energy estimates, which rely on conditions (A) and (B), will be carried out in a geometric 
setting using $\eps$-dependent energy momentum tensors and $\eps$-dependent Sobolev norms. 
These notions have been introduced in \cite{VW:00} and a suitable generalization of this
method will be presented in the next section.

\subsection{Higher order energy estimates}\label{ee}

Let $U\supseteq V$ be the neighborhoods of $p$ given by conditions (B) and (C). 
The solution candidate $(u_\eps)_\eps$ is defined on 
$V$ and we are going to estimate $u_\eps$ on some suitable neighborhood of $p$.

\begin{figure}[ht!]
\begin{center}
\includegraphics[width=5in]{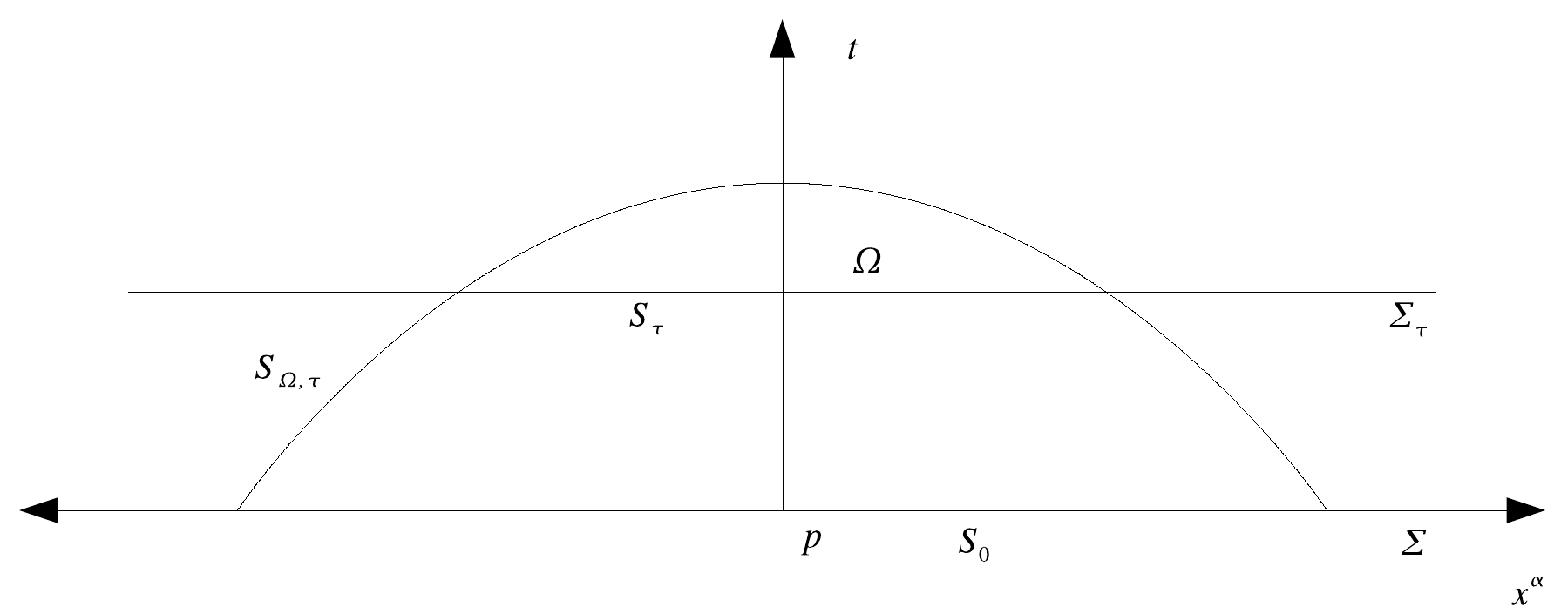}
\end{center}
\caption{Local foliation of space-time} \label{fig:1}
\end{figure}
We start by introducing some notation. Let $\Omega$ be an open neighborhood of
$p$ with the property that $\overline{\Omega} \subseteq V$, and such
that the boundary of the region $\Omega \cap \{ q \in U: t(q) \ge 0
\}$  and the boundary of the region $\Omega \cap \{ q \in U: t(q) \le 0
\}$ is space-like w.r.t.\ all $g_\eps$. Indeed, such a neighborhood exists
by condition (A). 
We now concentrate on the forward-in-time part of $\Om$ (i.e., the part where $t\ge 0$)
since the backward-in-time part can be dealt with analogously. To this end we set
$S_\tau := \Sigma_\tau \cap \Omega$ and denote by $\Omega_\tau$ the open
part of $\Omega$ between $\Sigma$ and $\Sigma_\tau$. We denote the
part of the boundary of $\Omega_{\tau}$ with $0 < t < \tau$ by
$S_{\Omega, \tau}$, so that $\partial \Omega_{\tau} = S_0 \cup
S_{\tau} \cup S_{\Omega, \tau}$ (see Figure~\ref{fig:1}).

From now on we will adopt abstract index notation for (generalized)
tensorial objects (see e.g. \cite{PR:87}). In particular,
representatives of the metric $g_{\eps}$ and its inverse
will be denoted by $g^{\eps}_{ab}$ and $g_{\eps}^{ab}$,
respectively. 
In addition, to simplify the notation for tensors we
are going to use capital letters to abbreviate tuples of indices,
i.e., we will write $T^I_J$ for $T^{p_1\dots p_r}_{q_1\dots q_s}$
with $|I|=r$, $|J|=s$. 

We now define a generalized Riemannian metric by
\[
{e} := g + 2 \bsighat \otimes
\bsighat,
\]
and use it in combination with the covariant derivative $\nabla$ of $g$
to define ``$\eps$-dependent'' Sobolev norms and energies on $U$. Observe that
by conditions (A) and (B) we have $\|e_\eps\|_m=O(1)$ and $\|\nabla_\eps e_\eps\|_m=O(1)$ on compact sets. We will also frequently need tensor products of ${e}$ and use the notation
$e_{IJ}=e_{p_1q_1}\dots e_{p_rq_r}$ with $|I|=r=|J|$.

\begin{defn}[Sobolev norms and energies]
Let $T^I_J$ be a smooth tensor field and $u$ a smooth function on $U$, $0 \leq \tau \leq \gamma$, and $k, j \in \N_0$.
\begin{enumerate}
\item We define the pointwise\ norm of $T^I_J$ by
$
\|T^I_J\|_{e_\eps}^2 := e^\eps_{KL} e_\eps^{IJ} T^K_I T^L_J
$\\
and the higher order pointwise norm of  $u$ by
$
|\nabla_\eps^{(j)}u|^2
:=||\nabla^\eps_{p_1}\dots\nabla^\eps_{p_j}u||^2_{e_\eps}.
$

\item On $\Omega_\tau$ and $S_\tau$ we define the Sobolev norms 
\begin{eqnarray*}
\SobODt{u}{k}&:=&
\left(\sum_{j=0}^k \int_{\Omega_\tau}|\nabla_\eps^{(j)}(u)|^2\mu^\eps\right)^{\frac{1}{2}},
\ \text{and}\\ 
\SobSDt{u}{k}&:=&\left(\sum_{j=0}^k \int_{S_\tau}|\nabla_\eps^{(j)}(u)|^2\mu_\tau^\eps\right)^{\frac{1}{2}}
\end{eqnarray*}
respectively. Here $\mu=[(\mu^\eps)_\eps]$ and $\mu_\tau=[(\mu_\tau^\eps)_\eps]$ denote the respective volume forms 
on $\Omega_\tau$ and $S_\tau$ derived from $g$.
Note that although in the second norm the integration is performed over
the three-dimensional manifold $S_\tau$ only, derivatives are not
confined to directions tangential to $S_\tau$.

\item On $\Omega$ we define the energy momentum tensors by ($k > 0$)
\begin{eqnarray*}
T^{ab, 0}_{\eps}(u) &:=&-\frac{1}{2}g^{ab}_\eps u^2,
\\
T^{ab, k}_{\eps}(u) &:=&\big( g^{ac}_\eps
g^{bd}_\eps-\frac{1}{2}g^{ab}_\eps g^{cd}_\eps \big)e^{p_1q_1}_\eps
\dots e^{p_{k-1}q_{k-1}}_\eps
\\&&\hskip 1cm \times
(\nabla_c^\eps \nabla_{p_1}^\eps \dots \nabla_{p_{k-1}}^\eps u)
(\nabla_d^\eps \nabla_{q_1}^\eps \dots \nabla_{q_{k-1}}^\eps u),
\end{eqnarray*}
\item Finally, the energy integrals are defined by
\begin{equation}
E^k_{\tau, \eps}(u):= \sum_{j=0}^k \int_{S_\tau} T^{ab, j}_\eps(u)
\xi_a {\xi}_b\, V_\eps^{-1}\mu_\tau^\eps. 
\label{HEintegrals}
\end{equation}
\end{enumerate}
\end{defn}

A straightforward calculation shows that the tensor fields $T^{ab, k}_{\eps}(u)$ satisfy 
the dominant energy condition hence an application of Stokes' theorem yields the basic energy estimate
(see e.g.\ \cite[Sec.\ 4.3]{HE:73})
\begin{equation}
\label{energyhierarchystokes} E^k_{\tau, \eps}(u) \leq E^k_{\tau=0,
\eps}(u) + \sum_{j=0}^k \int_{\Omega_\tau} \left( \xi_b
\nabla_a^\eps T^{ab, j}_\eps(u) + T^{ab, j}_\eps(u) \nabla_a^\eps
\xi_b \right)\mu_\eps.
\end{equation}

One key estimate in our approach is the equivalence of Sobolev norms and
energies. Indeed using condition (A) and the estimate \eqref{b1} one may 
derive (\cite[Lemma 4.1]{GMS:09}):

\begin{lem}[Energy integrals and Sobolev norms]
\label{lemma1}
There exist constants $C, C'$ such that for each $k\geq 0$ and all $\eps$ small 
\begin{equation}
\label{ineqEXSD} C(\SobSDt{u}{k})^2 \leq E^k_{\tau, \eps}(u) \leq
C'(\SobSDt{u}{k})^2.
\end{equation}
\end{lem}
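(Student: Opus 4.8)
The plan is to establish the two-sided inequality \eqref{ineqEXSD} pointwise on $S_\tau$ and then integrate, since the $\eps$-dependent volume form $\mu_\tau^\eps$ is common to both the energy integral $E^k_{\tau,\eps}(u)$ and the Sobolev norm $\SobSDt{u}{k}$. First I would unravel the definitions: the energy integral is a sum of integrals of $T^{ab,j}_\eps(u)\xi_a\xi_b\,V_\eps^{-1}$, while the squared Sobolev norm is a sum of integrals of $|\nabla_\eps^{(j)}u|^2 = \|\nabla^\eps_{p_1}\cdots\nabla^\eps_{p_j}u\|^2_{e_\eps}$. Thus it suffices to show that there are constants $C,C'$ (uniform in $\tau$ and in small $\eps$) with
\[
C\,|\nabla_\eps^{(j)}u|^2 \;\leq\; T^{ab,j}_\eps(u)\,\xi_a\xi_b\,V_\eps^{-1} \;\leq\; C'\,|\nabla_\eps^{(j)}u|^2
\]
at each point of $S_\tau$, for every $j$ in the relevant range.

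The heart of the argument is therefore a linear-algebra estimate at a single point in the fiber. At a point $q\in S_\tau$ the contraction $T^{ab,j}_\eps(u)\xi_a\xi_b$ is a quadratic form in the components of the derivative tensor $\nabla^\eps_{p_1}\cdots\nabla^\eps_{p_j}u$, and I would show that this quadratic form is comparable to the Riemannian norm $\|\cdot\|^2_{e_\eps}$ induced by $e=g+2\bsighat\otimes\bsighat$. The key observation is that $e_\eps$ is a genuine (positive definite) Riemannian metric and that the normal direction $\xi$ is the direction in which the Lorentzian form $g_\eps$ and the Riemannian form $e_\eps$ differ; contracting the energy-momentum tensor with $\xi_a\xi_b$ produces exactly the ``timelike plus spacelike'' combination that reconstitutes a positive definite form. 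The factor $V_\eps^{-1}$, with $V_\eps^2=-g_\eps(\bxieps,\bxieps)$, is precisely what normalizes this so that the comparison constants are $\eps$-independent. Concretely, for $j=0$ one has $T^{ab,0}_\eps(u)\xi_a\xi_b V_\eps^{-1}=-\tfrac12 g^{ab}_\eps\xi_a\xi_b V_\eps^{-1}u^2=\tfrac12 V_\eps u^2$, and the bounds \eqref{b1}, together with condition (A), confine $V_\eps$ between two positive constants; the higher-order terms are handled the same way after pulling out the $e_\eps^{p_iq_i}$ factors.

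The main obstacle is securing uniformity of the constants $C,C'$ in both $\eps$ and $\tau$. This is exactly where conditions (A) and \eqref{b1} enter: condition (A) gives $\|g_\eps\|_m=O(1)$ and $\|g_\eps^{-1}\|_m=O(1)$, hence two-sided bounds $c\,m\le e_\eps\le C\,m$ comparing $e_\eps$ to the fixed background metric $m$ uniformly on the compact closure $\overline\Omega$, while \eqref{b1} bounds $V_\eps$ away from zero and (via (A)) from above. Since all the objects in the quadratic forms are continuous in their algebraic arguments and the relevant set of unit tensors is compact, the eigenvalues of the energy form relative to $\|\cdot\|^2_{e_\eps}$ stay in a fixed compact subinterval of $(0,\infty)$, giving the constants independently of the point, of $\tau\in[0,\gamma]$, and of small $\eps$. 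I would close by noting that this is the computation already carried out in \cite[Lemma 4.1]{GMS:09}; the only new point is that conditions (A) and (B) as formulated here supply precisely the uniform bounds that the argument requires, so the proof transfers verbatim.
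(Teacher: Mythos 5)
Your proposal is correct and takes essentially the same route as the paper, which proves this lemma simply by invoking \cite[Lemma 4.1]{GMS:09} together with the observation that condition (A) and the bound \eqref{b1} furnish the uniform constants---exactly the two ingredients your argument isolates. As a minor simplification, the pointwise comparison you set up is in fact an exact identity, $T^{ab,j}_\eps(u)\,\xi_a\xi_b\,V_\eps^{-1} = \tfrac{V_\eps}{2}\,|\nabla_\eps^{(j)}u|^2$ (contract in a frame adapted to $\hat\xi$ and note $\xi^c\xi^d + \tfrac12 V_\eps^2 g_\eps^{cd} = \tfrac{V_\eps^2}{2} e_\eps^{cd}$), so your eigenvalue/compactness step can be dispensed with and only the two-sided bound on $V_\eps$ from (A) and \eqref{b1} is needed.
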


With this tool at hand we may derive 
the core estimate allowing to prove existence and uniqueness of
solutions.

\begin{prop}
\label{energyinequality} Let $(u_{\eps})$ be a solution candidate
on $V$. Then, for each $k \ge 1$, there
exist positive constants $C_k', C_k'', C_k'''$ such that for all
$0\leq\tau\leq\gamma$ we have
\begin{eqnarray}
\label{energyinequalitylevelkformula} E^k_{\tau, \eps}(u_\eps)
&\leq& E^k_{0, \eps}(u_\eps) +C_k'(\SobODt{f_\eps}{k-1})^2
+C_k''\sum_{j=1}^{k-1}\frac{1}{\eps^{2(1+k-j)}}
\int_{0}^\tau E_{\zeta, \eps}^j(u_\eps)d\zeta\nonumber\\
&&+C_k''' \int_{0}^\tau E_{\zeta, \eps}^k(u_\eps) d\zeta.
\end{eqnarray}
\end{prop}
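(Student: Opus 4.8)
The plan is to bound, for each fixed $j$ with $0\le j\le k$, the two integrands occurring on the right-hand side of the basic estimate~\eqref{energyhierarchystokes}, and then to sum over $j$. The term $T^{ab,j}_\eps(u_\eps)\,\nabla_a^\eps\xi_b$ is harmless: it is quadratic in $\nabla_\eps^{(j)}u_\eps$ and, by condition~(B) in the form $\sup_K\|\nabla^\eps\xi_\eps\|_m=O(1)$, is pointwise $\le C\,|\nabla_\eps^{(j)}u_\eps|^2$ with an $\eps$-uniform constant. For the divergence term I set $V_I:=\nabla^\eps_{p_1}\cdots\nabla^\eps_{p_{j-1}}u_\eps$, so that $T^{ab,j}_\eps$ is the stress--energy tensor of the tensor field $V_I$ measured with the fibre metric $e_\eps^{IJ}$. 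Since $g_\eps$ is covariantly constant, a direct computation shows that $\nabla_a^\eps T^{ab,j}_\eps(u_\eps)$ equals the principal term $e_\eps^{IJ}(\Box_\eps V_I)\,\nabla^b_\eps V_J$ plus two lower-order contributions: a curvature term, produced by commuting covariant derivatives on $V_J$, and a term in which $\nabla_a^\eps$ falls on $e_\eps^{IJ}$. By metric compatibility $\nabla^\eps e_\eps$ sees only the $\bsighat\otimes\bsighat$ part of $e$, so condition~(B) gives $\|\nabla^\eps e_\eps\|_m=O(1)$ and the latter term is $\le C\,|\nabla_\eps^{(j)}u_\eps|^2$. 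The former equals $R_\eps*V_J$ contracted with $\nabla^a_\eps V_I$, with $R_\eps$ the Riemann tensor of $g_\eps$; since $R_\eps$ carries two derivatives of $g_\eps$, condition~(A) gives $\|R_\eps\|_m=O(\eps^{-2})$ and so this term is $\le C\eps^{-2}\,|\nabla_\eps^{(j)}u_\eps|\,|\nabla_\eps^{(j-1)}u_\eps|$.

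Into the principal term I now feed the equation. Commuting $\Box_\eps$ through the $j-1$ covariant derivatives,
\[
\Box_\eps V_I=\nabla_\eps^{(j-1)}(\Box_\eps u_\eps)+[\Box_\eps,\nabla_\eps^{(j-1)}]u_\eps=\nabla_\eps^{(j-1)}f_\eps+[\Box_\eps,\nabla_\eps^{(j-1)}]u_\eps,
\]
since $\Box_\eps u_\eps=f_\eps$. The structural point is that $[\Box_\eps,\nabla_\eps^{(j-1)}]$ annihilates constants, so each of its terms carries at least one derivative of $u_\eps$; schematically it is a sum, over $1\le b\le j-1$, of terms $(\nabla_\eps^{(j-1-b)}R_\eps)*\nabla_\eps^{(b)}u_\eps$, and by condition~(A) the coefficient of $\nabla_\eps^{(b)}u_\eps$ is $O(\eps^{-(j+1-b)})$ (two derivatives for $R_\eps$ plus $j-1-b$ further ones). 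Consequently the principal term is pointwise controlled by $|\nabla_\eps^{(j-1)}f_\eps|\,|\nabla_\eps^{(j)}u_\eps|$ together with the terms $\eps^{-(j+1-b)}\,|\nabla_\eps^{(b)}u_\eps|\,|\nabla_\eps^{(j)}u_\eps|$ for $1\le b\le j-1$ (the curvature term of the previous paragraph being the case $b=j-1$). The cases $j=0,1$ carry no curvature contribution at all, and for $j=1$ the principal term is simply $f_\eps\,\nabla^b_\eps u_\eps$.

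The decisive step is to apply a weighted Young inequality $xy\le\tfrac\delta2 x^2+\tfrac1{2\delta}y^2$ with an $\eps$-independent weight $\delta$, always putting the top-order factor $|\nabla_\eps^{(j)}u_\eps|$ in the $x$-slot. This shifts every negative power of $\eps$ onto a strictly lower-order derivative of $u_\eps$, while keeping $|\nabla_\eps^{(j)}u_\eps|^2$ and $|\nabla_\eps^{(j-1)}f_\eps|^2$ with $\eps$-uniform coefficients; the integrand is thus bounded pointwise by
\[
C\,|\nabla_\eps^{(j)}u_\eps|^2+C\,|\nabla_\eps^{(j-1)}f_\eps|^2+\sum_{b=1}^{j-1}C\,\eps^{-2(j+1-b)}\,|\nabla_\eps^{(b)}u_\eps|^2.
\]
I then integrate over $\Omega_\tau$, rewrite the volume integral as the iterated integral $\int_0^\tau\!\int_{S_\zeta}(\cdots)\,d\zeta$ (the co-area factor being $O(1)$ and bounded away from zero by~\eqref{b1}), and pass from Sobolev norms to energies via Lemma~\ref{lemma1}. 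The three groups become, respectively, $C\int_0^\tau E^j_{\zeta,\eps}(u_\eps)\,d\zeta\le C\int_0^\tau E^k_{\zeta,\eps}(u_\eps)\,d\zeta$ (using $E^j\le E^k$ for $j\le k$), the source contribution $C(\SobODt{f_\eps}{k-1})^2$, and $\sum_{b=1}^{j-1}C\,\eps^{-2(j+1-b)}\int_0^\tau E^b_{\zeta,\eps}(u_\eps)\,d\zeta$. Because $j\le k$ one has $\eps^{-2(j+1-b)}\le\eps^{-2(1+k-b)}$ for small $\eps$, so the last group is absorbed into $\sum_{i=1}^{k-1}\eps^{-2(1+k-i)}\int_0^\tau E^i_{\zeta,\eps}(u_\eps)\,d\zeta$. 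Summing over $0\le j\le k$ and relabelling constants yields~\eqref{energyinequalitylevelkformula}.

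The step I expect to be the main obstacle is exactly the bookkeeping in the weighted Young inequality: one must check that, after commuting $\Box_\eps$ past the derivatives, no negative power of $\eps$ ever attaches to the top-order term $|\nabla_\eps^{(j)}u_\eps|^2$---equivalently, that the curvature always multiplies a strictly lower-order derivative of $u_\eps$, which is precisely what the annihilation of constants guarantees. This is what keeps the coefficient $C_k'''$ of $\int_0^\tau E^k_{\zeta,\eps}(u_\eps)\,d\zeta$ independent of $\eps$, and hence what makes the subsequent Gr\"onwall argument produce moderate bounds; had an $\eps^{-p}$ factor survived on the top-order energy, the estimate would be useless for the moderateness proof.
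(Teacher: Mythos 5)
Your argument is correct and matches the paper's own (sketched) proof essentially step for step: both start from the basic estimate \eqref{energyhierarchystokes}, bound the $T^{ab,j}_\eps(u_\eps)\nabla^\eps_a\xi_b$ term via condition (B), handle the divergence term by substituting the equation $\Box_\eps u_\eps=f_\eps$ after commuting covariant derivatives (curvature terms estimated by (A), terms with $\nabla^\eps e_\eps$ by (B), the low orders $j=0,1$ treated directly), and close via Lemma \ref{lemma1}. Your explicit bookkeeping of the $\eps$-powers $\eps^{-2(1+k-j)}$ through the weighted Young inequality--in particular the observation that the commutator annihilates constants, so no negative power of $\eps$ ever lands on the top-order energy--is exactly the content the paper delegates to \cite{GMS:09}.
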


Before sketching the proof of this statement, we draw the essential conclusions
from it. Observe that the constant in front of the highest order
term on the r.h.s.\ does not depend on $\eps$, hence we obtain, by
an application of Gronwall's lemma:

\begin{cor}\label{energyestimate} Let
$(u_{\eps})$ be a solution candidate on $V$.
Then, for each $k \geq 1$, there exist positive constants $C_k', C_k'',
C_k'''$ such that for all $0 \leq \tau \leq \gamma$,
\begin{eqnarray}
\label{applicationenergygronwallformula}
 \lefteqn{E^k_{\tau,\eps}(u_\eps)} \\ \nonumber
 &&\leq \left(E^k_{0, \eps}(u_\eps)+C_k'
(\SobODt{f_\eps}{k-1})^2+C_k''\sum_{j=1}^{k-1}\frac{1}{\eps^{2(1+k-j)}}
\int\limits_{\zeta=0}^\tau E_{\zeta, \eps}^j(u_\eps) d\zeta\right)
e^{C_k'''\tau}
\end{eqnarray}
Consequently, if the initial energy $(E^k_{0, \, \eps}(u_\eps))_\eps$ is a moderate (resp.\
negligible) net of real numbers, and $(f_\eps)_\eps$ is negligible then 
\[
\sup_{0 \leq \tau \leq \gamma} (E^k_{\tau, \, \eps}(u_\eps))_\eps
\]
is moderate (resp.\ negligible).
\end{cor}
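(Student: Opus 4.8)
The plan is to recognize \eqref{applicationenergygronwallformula} as the output of the integral form of Gronwall's inequality applied to the estimate \eqref{energyinequalitylevelkformula} of Prop.\ \ref{energyinequality}, and then to read off the moderateness/negligibility claims by an induction on $k$.

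First I would rewrite the estimate from Prop.\ \ref{energyinequality} in the schematic form $E^k_{\tau,\eps}(u_\eps) \le A_\eps(\tau) + C_k''' \int_0^\tau E^k_{\zeta,\eps}(u_\eps)\,d\zeta$, where
$$
A_\eps(\tau) := E^k_{0,\eps}(u_\eps) + C_k'(\SobODt{f_\eps}{k-1})^2 + C_k''\sum_{j=1}^{k-1}\frac{1}{\eps^{2(1+k-j)}}\int_0^\tau E^j_{\zeta,\eps}(u_\eps)\,d\zeta.
$$
The two points that make Gronwall applicable are that $A_\eps$ is nondecreasing in $\tau$ and that the coefficient $C_k'''$ of the top-order integral term is independent of $\eps$. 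Monotonicity of $A_\eps$ follows because the integrands $E^j_{\zeta,\eps}(u_\eps)$ are nonnegative (indeed bounded below by a multiple of a squared Sobolev norm, by Lemma \ref{lemma1}), while the remaining summands are constant in $\tau$. The integral version of Gronwall's lemma then yields $E^k_{\tau,\eps}(u_\eps) \le A_\eps(\tau)\, e^{C_k'''\tau}$, which is exactly \eqref{applicationenergygronwallformula}. Crucially, since $C_k'''$ does not depend on $\eps$, the exponential factor is bounded by the fixed constant $e^{C_k'''\gamma}$ on $[0,\gamma]$, uniformly in $\eps$.

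For the asymptotic conclusion I would argue by induction on $k$, noting that both statements (moderate and negligible) rest on the same elementary fact: multiplying a moderate (resp.\ negligible) net by a fixed negative power $\eps^{-N}$ again produces a moderate (resp.\ negligible) net. In the case $k=1$ the sum in $A_\eps$ is empty, so $\sup_{0\le\tau\le\gamma}E^1_{\tau,\eps}(u_\eps) \le \big(E^1_{0,\eps}(u_\eps)+C_1'(\SobODt{f_\eps}{0})^2\big)e^{C_1'''\gamma}$; here $(E^1_{0,\eps})_\eps$ is moderate (resp.\ negligible) by hypothesis and the $f$-term is negligible since $(f_\eps)_\eps$ is, giving the claim directly. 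For the inductive step, assume the conclusion for all $j$ with $1\le j<k$. Then $\int_0^\tau E^j_{\zeta,\eps}(u_\eps)\,d\zeta \le \gamma\,\sup_\zeta E^j_{\zeta,\eps}(u_\eps)$ is moderate (resp.\ negligible), and after multiplication by $\eps^{-2(1+k-j)}$ it remains so; summing the finitely many such terms and adding the moderate (resp.\ negligible) initial energy and the negligible $f$-contribution shows that $\sup_\tau A_\eps(\tau)$ is moderate (resp.\ negligible). Multiplying by the bounded factor $e^{C_k'''\gamma}$ preserves this, so $\sup_{0\le\tau\le\gamma}E^k_{\tau,\eps}(u_\eps)$ is moderate (resp.\ negligible).

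The only genuine subtlety---and what I would flag as the main obstacle---is the presence of the blow-up factors $\eps^{-2(1+k-j)}$ in front of the lower-order energies. These would wreck a naive estimate, but they are harmless here for two complementary reasons: the Colombeau asymptotic scales ($O(\eps^{-N})$ for moderate, $O(\eps^m)$ for all $m$ for negligible) are both stable under multiplication by any fixed negative power of $\eps$, and the induction on $k$ ensures the lower-order energies have already been controlled before these factors are applied. The one structural input that cannot be circumvented is that the top-order coefficient $C_k'''$ be $\eps$-free; this is precisely what was arranged in Prop.\ \ref{energyinequality} and is what lets Gronwall produce an $\eps$-uniform exponential rather than one that degenerates as $\eps\to 0$.
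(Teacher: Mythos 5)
Your proposal is correct and follows essentially the same route as the paper, which likewise obtains \eqref{applicationenergygronwallformula} by applying the integral form of Gronwall's lemma to Proposition~\ref{energyinequality}---the decisive point being, exactly as you stress, that the top-order constant $C_k'''$ is independent of $\eps$---and then reads off the moderateness/negligibility claims inductively in $k$, using stability of the Colombeau asymptotic scales under multiplication by fixed powers $\eps^{-N}$. One trivial correction: the term $C_k'\,(\SobODt{f_\eps}{k-1})^2$ is not constant in $\tau$ (the integration domain $\Omega_\tau$ grows with $\tau$), but it is nondecreasing, which is all your Gronwall argument needs.
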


Now we \emph{sketch the proof of Proposition \ref{energyinequality}}: We have 
to estimate the right hand side of the basic energy estimate \eqref{energyhierarchystokes}.
Starting with the second term under the integral we use condition (B) to obtain 
\begin{equation}\label{modi1}
\left| T^{ab, j}_\eps(u_{\eps}) \nabla_a^\eps \xi_b \right| 
\le\Vert T^{ab, j}_\eps(u_{\eps}) \Vert_{{e}_{\eps}} \Vert
\nabla_{(a}^\eps \xi_{b)} \Vert_{{e}_{\eps}} 
\leq C\,\Vert T^{ab, j}_\eps(u_{\eps}) \Vert_{{e}_{\eps}}.
\end{equation}
Observe that by condition (A), the Riemannian metric ${e}$ is $O(1)$ hence $\|\ \|_m$ and $\|\ \|_{e_\eps}$ 
are equivalent norms. We now may estimate 
$\Vert T^{ab, j}_\eps(u_{\eps}) \Vert_{{e}_{\eps}}$ by the higher order pointwise norm of $u$. 
After integration this gives
\begin{equation}\label{term2}
\left| \sum_{j=0}^k \int_{\Omega_\tau} T^{ab, j}_\eps(u_{\eps})
 \nabla_a^\eps \xi_b \mu_\eps \right| 
\ \leq\ C\, (\SobODt{u_{\eps}}{k})^2.
\end{equation}

Now turning to the divergence term on the right hand side of \eqref{energyhierarchystokes}
we start with orders $k=0,1$. Using the wave equation we find
\[
\nabla_a^\eps T^{ab, 0}_\eps(u_\eps) = -u_\eps \nabla_\eps^b u_\eps\quad \text{and}\quad 
\nabla_a^\eps T^{ab, 1}_\eps(u_\eps) = f_\eps \nabla^b_\eps u_\eps,
\]
which after integration clearly can be estimated by the squares of $\SobODt{u_{\eps}}{1}$ and $\SobODt{f_\eps}{0}$. 
Inserting this and \eqref{term2} for $k=1$ into \eqref{energyhierarchystokes} we obtain
\[
E^1_{\tau, \eps}(u_\eps) 
\leq 
E^1_{0, \eps}(u_\eps) +C\,\left(\SobODt{f_\eps}{0}\right)^2 + C\left(\SobODt{u_\eps}{1}\right)^2.
\] 
Next we use Lemma \ref{lemma1} to estimate the Sobolev norm of $u_\eps$ in terms of its energy, i.e.,
\[
\left(\SobODt{u_\eps}{1}\right)^2 =
\int_{0}^\tau(\SobSDz{u_\eps}{1})^2 d\zeta \leq C
\int_{0}^\tau E^1_{\zeta, \eps}(u_\eps)d\zeta,
\]
which gives the claim for $k=1$ (with $C_1''=0$).

Finally, one has to estimate the divergence of the higher (i.e., $k>1$) order energy momentum tensors.
The general strategy is, of course, to rewrite terms containing the $(k+1)^{\text{st}}$ order derivative of $u_\eps$
using the wave equation. This necessitates interchanging the oder of covariant derivatives, which
introduces additional curvature terms. These can be estimated using condition (A). Observe, however, that there
also appear terms where the covariant derivatives falls on ${e}$. These terms of the form 
$\nabla^\eps_a e^{IJ}_\eps$ can be estimated thanks to condition (B). \hfill $\Box$
\medskip

We finally \emph{sketch the proof of Th.\ \ref{mainthm}}: We have already noted the 
existence of a solution candidate $(u_\eps)$ on $V$. To prove that $u_\eps$ 
is moderate on $\Omega_\gamma$ we start from moderateness of the data $u_0,u_1$. Inductively using the wave equation
this translates into moderateness of the initial energies $(E^k_{0, \eps}(u_\eps))_\eps$.
Now by Corollary~\ref{energyestimate} we obtain moderateness
of the energies $(E^k_{\tau,\eps}(u_\eps))_\eps$ for all $0\leq \tau\leq \gamma$.
Finally, we use the Sobolev embedding theorem (together with the fact the volume is $O(1)$
due to condition (A)) to estimate the sup-norm of $u_\eps$
in terms of the Sobolev norms, which in turn can be bounded by the energies $(E^k_{\tau,\eps}(u_\eps))_\eps$
due to Lemma \ref{lemma1} (for details see \cite[Lemma 6.2]{GMS:09}). So we see that
moderateness of the energies implies moderateness of $(u_\eps)$ and we have proved existence of solutions.

Uniqueness follows along the same lines
replacing moderateness by negligibility.\hfill $\Box$

\section{Wave equations of non-smooth metrics II: The global
theory}\label{global}

We extend the results of Th.\ \ref{mainthm} to establish existence and uniqueness
of global generalized solutions. As in the classical situation we have to impose additional global
conditions on the generalized Lorentzian metric to control causality properties of
space-time in the large. Thus, we begin by transferring the notion of global
hyperbolicity to the setting of generalized space-times by appealing to a
variant in terms of the metric splitting property stated in Th.\ \ref{BS},
(iii).

\begin{defn}\label{genglobhyp} Let $g$ be a generalized Lorentz metric on the
smooth $(n+1)$-dimensional manifold $M$. We say that $(M,g)$ allows
a \emph{globally hyperbolic metric splitting} if there exists a
$\Cinf$-diffeomorphism $\psi \col M \to \R \times S$, where $S$ is an
$n$-dimensional smooth manifold such that the following holds for the pushed
forward generalized Lorentz metric ${\la} := \psi_*g$ on $\R\times
S$: 
\begin{trivlist}
\item{(a)} There is a representative $(\la_\eps)_{\eps\in I}$ of $\la$ such that
every $\la_\eps$ is a Lorentz metric and each slice $\{t_0\}\times S$ with
arbitrary $t_0 \in\R$ is a (smooth, spacelike) Cauchy hypersurface for every
$\la_\eps$ ($\eps\in I$).

\item{(b)} We have the metric splitting of $\la$ in the form 
$$
   {\lambda} = - \be dt^2  + {h}, 
$$
where ${h} \in \Gamma_\G(\text{pr}_2^*(T^0_2 S))$ is a $t$-dependent 
generalized Riemannian metric (in the sense of Def.\  \ref{tdepgmetric}) and
$\be \in \G(\R\times S)$ is globally bounded and \emph{locally uniformly
positive}, i.e., for some (hence any) representative $(\beta_\eps)$ of $\be$ and
 for every
$K \subset\subset \R \times S$ we can find a constant $C>0$ such that
$\beta_\eps(x)\geq C$ holds for small $\eps > 0$ and $x\in K$.

\item{(c)} For every $T > 0$ there exists a representative $({h}_\eps)$
of ${h}$ and a smooth complete Riemannian metric ${\rho}$ on $S$
which  uniformly bounds $h$ from below in the following sense: for all $t \in
[-T,T]$, $x \in S$, $v \in T_x S$, and $\eps \in I$
$$
   ({h}_\eps)_t (v,v) \geq {\rho}(v,v).
$$   

\end{trivlist}
\end{defn}

\begin{rem} Observe that the basic splitting structure and the requirements on 
lower bounds for $\be$ and ${h}$ in the above definition display common
features with the notion of regularly sliced space-times 
(\cite[Ch.\ XII, Subsec.\ 11.4]{Choquet-Bruhat:09}) which provide sufficient
conditions for global hyperbolicity in the smooth case (\cite{CBC:02}). 
\end{rem} 

\begin{ex} To obtain simple non-trivial examples of generalized space-times
satisfying the conditions of Def.\  \ref{genglobhyp} we consider
Robertson-Walker space-times. First, we briefly recall the classical situation:
Let $(S,h_0)$ be a connected Riemannian manifold, $f \col \R \to\, ]0,\infty[$
be smooth, and put $\la_{(t,x)} = -dt^2 + f(t)^2 (h_0)_x$ for every $(t,x)\in
\R\times S$. Then  the Lorentzian metric $\la$ on $\R \times S$ is globally
hyperbolic if and only if $(S,h_0)$ is complete (cf.\ \cite[Lemma
A.5.14]{BGP:07}). Moreover, if this is the case, then every slice $\{t_0\}
\times S$ is a smooth, spacelike Cauchy hypersurface.

We generalize the Robertson-Walker space-time by allowing as warping function
any $f \in \G(\R)$ that is globally bounded and locally uniformly positive and replace
$h_0$ by a generalized Riemannian metric on $S$ which is bounded below by some
smooth complete Riemannian metric (to guarantee condition (c)). By Lemma
\ref{globallem} on globalization techniques we may  pick representatives
$(f_\eps)$ of $f$ and $(h_{0\eps})$ of $h_0$ such that the smooth function
$f_\eps$ is everywhere positive and $h_{0\eps}$ is a Riemannian metric on $S$ for
every $\eps \in I$. In addition, we assume that 
each $h_{0\eps}$ is complete. The generalized Lorentz metric $\la := - dt^2 + f^2 h_0$
on $\R\times S$ then trivially satisfies condition (b). Putting $\la_\eps :=
-dt^2 + f_\eps^2  h_{0\eps}$ ($\eps \in I$) we obtain a representative of $\la$.
By completeness of the Riemann metric $h_{0\eps}$, the Lorentz metric $\la_\eps$
is globally hyperbolic and every slice $\{t_0\} \times S$ is a Cauchy
hypersurface for every $\eps \in I$. Thus condition (a) in Def.\ 
\ref{genglobhyp} is also satisfied.
\end{ex}

From now on we consider only generalized space-times $(M,g)$ which possess a
globally hyperbolic metric splitting. To simplify notation we will henceforth
suppress the diffeomorphism providing the splitting and assume that $M =
\R\times S$ and $g = \la$ with $S$ and $\la$ as in the statement of Def.\ 
\ref{genglobhyp}. Thus, the generalized space-time is represented by a family of
globally hyperbolic space-times $(M,g_\eps)$ such that $S \isom \{0\}\times S$
is a Cauchy hypersurface for every $g_\eps$ ($\eps \in I$).

Therefore we are provided with a suitable Cauchy hypersurface for the
initial value problem for the wave operator $\Box$ corresponding to the
generalized space-time metric $g$ on $M$, i.e., the Cauchy problem
\begin{eqnarray} \label{CPG}
\Box u &=& 0 \quad \text{ on } M \nonumber\\
u &=& u_0 \quad \text{ on } S\\
\nabla_{\bxihat} u &=& u_1 \quad \text{ on } S. \nonumber
\end{eqnarray}
Here the unit normal vector field of $S$ is given by $\bxihat=\frac{1}{\sqrt{\beta}}\partial_t$ 
and the initial data $u_0$, $u_1$ are assumed to belong to $\G(S)$ and to have
compact supports, e.g., arising by embedding distributional data from $\E'(S)$.

Now the key strategy to establish a global version of Th.\ \ref{mainthm} on
existence and uniqueness of solutions to the Cauchy problem \eqref{CPG} is as
follows: From the classical existence and uniqueness result in Th.\
\ref{mainwave} for every $\eps \in I$ we obtain a solution
candidate defined on all of $M$, which again depends continuously on $\eps$. 
In this sense the global hyperbolic metric
splitting of $(M,g)$ replaces condition (C) used in the proof of Th.\
\ref{mainthm} to produce a solution candidate. Then we aim at showing
moderateness, thus existence of a generalized solution, as well as uniqueness by
employing energy estimates as
in the local constructions of Section \ref{local}.  Therefore it is appropriate
to suppose also condition (A), i.e., $g$ to be weakly singular. In the present
situation we may translate \eqref{conditionA} into corresponding asymptotic
conditions on $\beta$ and ${h}_t$. 
As for condition (B), we see that the existence of a suitable (in this case even
global) foliation is a consequence of the globally hyperbolic metric splitting.
Indeed we globally have $g_\eps^{-1}(dt,dt) = -1 /\beta_\eps \leq - C < 0$ 
for some positive $C$, which implies \eqref{b1}.  
The second asymptotic boundedness condition \eqref{b2} in (B), i.e.,
$\sup_K \left\Vert \nabla^{\eps} dt  \right\Vert_{{m}} = O(1)$,
now simply reads
\begin{equation}
   \left\Vert d \beta_\eps \right\Vert_{{m}} = O(1)   \qquad(\eps \to 0)
\end{equation}
uniformly on compact sets. As in the local setting this implies for the 
extrinsic curvature of the hypersurfaces $\{t\}\times S$
$$ 
   \left\Vert II_\eps  \right\Vert_{{m}} = O(1) 
    \qquad (\eps \to 0) \quad\text{uniformly on compact sets}.
$$

We may now state the main result of this section. 

\begin{thm}[Global existence and uniqueness of generalized solutions] Let
$(M,g)$ be a generalized space-time with a weakly singular metric admitting a
globally hyperbolic metric splitting and assume that condition \eqref{b2}
holds. 
Then the Cauchy problem \eqref{CPG} has a unique solution $u\in\G(M)$ for all
compactly supported $u_0, u_1\in\G(S)$.
\end{thm}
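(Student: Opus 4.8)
The plan is to follow the scheme sketched before the statement: construct a net of classical solutions $\eps$ by $\eps$, promote it to a moderate net by energy estimates that are run on each compact time interval, and obtain uniqueness by re-running the same estimates with negligibility in place of moderateness. By condition~(a) of Def.~\ref{genglobhyp} I first fix a representative $(g_\eps)_\eps=(\la_\eps)_\eps$ with $\la_\eps=-\be_\eps dt^2+h_\eps$ a genuine globally hyperbolic Lorentz metric for which $S=\{0\}\times S$ is a Cauchy hypersurface. Choosing representatives $(u_{0\eps})_\eps$, $(u_{1\eps})_\eps\in\D(S)$ of the compactly supported data, Th.~\ref{mainwave} produces for every $\eps$ a unique $u_\eps\in\Cinf(M)$ with $\Box_\eps u_\eps=0$, $u_\eps|_S=u_{0\eps}$, $\nabla_{\bxihat}u_\eps|_S=u_{1\eps}$, and with $\supp(u_\eps)\subseteq J^M_{g_\eps}(\supp u_{0\eps}\cup\supp u_{1\eps})$. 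As in Section~\ref{local} the construction depends continuously on $\eps$, so by \cite{BK:11} the net $(u_\eps)_\eps$ may be taken in $\E(M)$; this is the solution candidate.

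The main obstacle, and the one genuinely new ingredient compared with the local Theorem~\ref{mainthm}, is a bound on the speed of propagation that is \emph{uniform in} $\eps$, needed because the slices $\{t\}\times S$ are now non-compact. Fix $T>0$ and let $\rho$ be the complete Riemannian metric supplied by condition~(c) of Def.~\ref{genglobhyp} on the slab $t\in[-T,T]$. A $g_\eps$-causal curve $t\mapsto(t,\gamma(t))$ satisfies $h_\eps(\dot\gamma,\dot\gamma)\le\be_\eps$; since $h_\eps\ge\rho$ and $\be$ is globally bounded, its $\rho$-length over $[-T,T]$ is bounded by a constant independent of $\eps$. By Hopf--Rinow the corresponding $\rho$-ball around $\supp u_0\cup\supp u_1$ has compact closure $L\subseteq S$, so that $\supp(u_\eps)\cap([-T,T]\times S)\subseteq[-T,T]\times L$ for all $\eps$. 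Thus on every time slab the candidate is supported in a \emph{single} compact set uniformly in $\eps$; it is precisely the completeness of $\rho$ that stops the support from escaping to spatial infinity as $\eps\to0$, and this compact-support statement is the replacement for the relative compactness exploited in the local proof.

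With this in hand the energy machinery of Section~\ref{ee} applies on each slab. Condition~(A) is assumed, and condition~(B) holds globally because the splitting gives $g_\eps^{-1}(dt,dt)=-1/\be_\eps\le-C<0$ while hypothesis \eqref{b2} controls $\|\nabla^\eps dt\|_{m}$. I set up the integrals $E^k_{\tau,\eps}(u_\eps)$ on the regions $\{0\le t\le\tau\}$, $\tau\in[0,T]$ (and symmetrically on $[-T,0]$); because $u_\eps$ has compact spatial support inside $L$, Stokes' theorem yields \emph{no} side-boundary flux, so the basic estimate \eqref{energyhierarchystokes} and the hierarchy of Prop.~\ref{energyinequality} carry over verbatim with $\gamma$ replaced by $T$. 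Gronwall's lemma (Cor.~\ref{energyestimate}) then turns moderateness of the data — which, through the wave equation, gives moderateness of the initial energies $(E^k_{0,\eps}(u_\eps))_\eps$ — into moderateness of $(E^k_{\tau,\eps}(u_\eps))_\eps$ uniformly in $\tau\in[-T,T]$. Lemma~\ref{lemma1} together with the Sobolev embedding theorem (the volume being $O(1)$ by (A)) converts this into sup-norm bounds for $u_\eps$ and all its derivatives on the compact slab, the $\partial_t^2$-terms being eliminated repeatedly via $\Box_\eps u_\eps=0$. Since moderateness is local and every compact $K\subset M$ lies in some slab $[-T,T]\times L$, this gives $(u_\eps)_\eps\in\EM(M)$, so $u:=[(u_\eps)_\eps]\in\G(M)$ solves \eqref{CPG}.

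For uniqueness, if $\tilde u=[(\tilde u_\eps)_\eps]$ is a second solution then $w_\eps:=u_\eps-\tilde u_\eps$ satisfies $\Box_\eps w_\eps=n_\eps$ with $(n_\eps)_\eps$ and the initial data of $w_\eps$ negligible. Re-running the support and energy arguments above with ``negligible'' substituted for ``moderate'' throughout — this is the negligibility branch of Cor.~\ref{energyestimate} — gives $(w_\eps)_\eps\in\NN(M)$, hence $u=\tilde u$ in $\G(M)$; in particular the solution is independent of the chosen representatives. Everything beyond the uniform domain-of-dependence bound is a time-global but otherwise faithful rerun of the local estimates of Section~\ref{ee}, so I expect Step two to absorb the bulk of the genuine work.
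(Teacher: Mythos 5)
Your existence argument is sound but takes a genuinely different route from the paper's. The paper never establishes (nor needs) a uniform-in-$\eps$ support bound: it covers each compact slab $L_j=[-j,j]\times K_j$ by finitely many lens-shaped regions whose \emph{spacelike} lateral boundaries render the side flux in \eqref{energyhierarchystokes} harmless via the dominant energy condition; condition (c) of Def.~\ref{genglobhyp} (i.e.\ $h_\eps\geq\rho$ with $\rho$ complete) and the bounds on $\be$ enter only to bound the lens heights uniformly from below by some $\de>0$, so that $[0,j]\times K_j$ is exhausted in finitely many time steps. You instead convert condition (c) into a uniform finite-propagation-speed statement ($\rho$-speed of $g_\eps$-causal curves bounded by $\sqrt{\sup\be_\eps}$, then Hopf--Rinow), which is essentially the regularly-sliced argument of \cite{CBC:02}, and run a single slab-global Gronwall estimate; this buys a cleaner existence proof without time-stepping. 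One caveat you should patch: the representative from Def.~\ref{genglobhyp}(a) used to build $u_\eps$, the globally bounded representative of $\be$, and the representative from (c) need not coincide, and since your propagation bound is global in space while negligible differences between representatives are controlled only on compact sets, you need a short bootstrap (a causal curve emanating from $\supp u_0\cup\supp u_1$ must traverse a fixed compact collar, on which all representatives agree up to $O(\eps^m)$, before it could reach the uncontrolled region) to make the support statement representative-independent.

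The genuine gap is in your uniqueness step. For a second solution $\tilde u\in\G(M)$, the net $w_\eps:=u_\eps-\tilde u_\eps$ satisfies $\Box_\eps w_\eps=n_\eps$ with negligible data and right-hand side, but $\tilde u_\eps$ is an arbitrary representative of an arbitrary solution and carries \emph{no} support information: the inclusion $\supp(u_\eps)\subseteq J^M_{g_\eps}(\supp u_{0\eps}\cup\supp u_{1\eps})$ from Th.~\ref{mainwave} applies only to your constructed candidate, not to a competitor. Hence ``re-running the support argument'' is not possible, and without it your slab estimate collapses: the lateral boundary $[0,\tau]\times\partial L'$ is \emph{timelike}, so the flux of the energy-momentum tensors through it has no favorable sign (the dominant energy condition controls fluxes through spacelike hypersurfaces only), energy may enter from the side, and Gronwall cannot be closed. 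The repair is precisely the paper's mechanism: negligibility is a statement on compact sets, so one runs the estimates of Section~\ref{ee} on lens-shaped domains with spacelike lateral boundaries---which require no support hypothesis on $w_\eps$---and iterates across the slab in steps of the uniform lens height $\de/2$. With that substitution your uniqueness branch goes through; as written, it does not.
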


\emph{Sketch of proof:}
Let $\Box_\eps$ denote the wave operator corresponding to $g_\eps$.  Then
Th.\ \ref{mainwave} provides us with a global solution $u_\eps$ to $\Box_\eps
= 0$ with Cauchy data $u_{0\eps}$ and $u_{1\eps}$, thereby defining a solution
candidate.

To prove existence we have to establish moderateness of the net $(u_\eps)_\eps$.
Choose an exhaustive sequence of compact sets $K_j$ ($j\in\N$) in $S$. Then it
suffices to show moderateness of $(u_\eps)_\eps$ on $L_j := [-j,j] \times K_j$
for each $j\in \N$.

Fix $j\in\N$ and choose $\rho$ as in Def.\  \ref{genglobhyp}, (c) with
$T=j$. We cover $L_j$ by finitely many lens-shaped regions as in Figure \ref{fig:1} that are, in turn, contained in coordinate neighborhoods. By the explicit construction
given in \cite[Sec.\  3.3.4]{dude} (based on condition (A)) the
heights of these lenses are uniformly bounded below  on $L_j$ by some $\de > 0$
due to the properties of $\rho$ and $\beta$. 

Employing the energy estimates from Section 4.3 we thereby derive moderateness
estimates of $u_\eps$ on the strip $[0,\de/2] \times K_j$ from the moderateness
of $u_{0\eps}$ and $u_{1 \eps}$. We may iterate this procedure to cover $[0,j]
\times K_j$ in finitely many steps, and analogously for $[-j,0] \times K_j$. 

By the same token negligibility of $(u_{0\eps})$ and $(u_{1\eps})$ implies the 
corresponding property for $(u_{\eps})$, which proves uniqueness. \hfill$\Box$ 
\medskip

Finally, we remark that our methods also allow to treat the inhomogeneous equation
as well as the inclusion of lower order terms, which have to satisfy certain asymptotic 
bounds, see \cite{H:11}. 

Also, condition (A) is actually a little stronger than what is needed to
prove moderateness of the solution candidate resp.\ negligibility in case of negligible data. 
Indeed, again by a result of \cite{H:11}, it suffices to suppose (A) for $k=0$. On the other hand
we could use (A) to explicitly calculate $\eps$-power bounds of (derivatives) of the solution,
which encode additional regularity information of our generalized solutions.


\subsection*{Acknowledgment} G.H.\ was supported by an EPSRC Pathway to Impact Award.
We also acknowledge the support of FWF-projects  Y237, P20525, and P23714.

\end{document}